\newtheorem{thm}{Theorem}
\newtheorem{cor}[thm]{Corollary}
\newtheorem{prop}[thm]{Proposition}
\newtheorem{lem}[thm]{Lemma}
\newtheorem{defn}[thm]{Definition}
\newcommand\scalemath[2]{\scalebox{#1}{\mbox{\ensuremath{\displaystyle #2}}}}
\theoremstyle{definition} 
\newtheorem{defin}[thm]{Definition}
\newtheorem{rem}[thm]{Remark}
\theoremstyle{remark}
\numberwithin{thm}{section}
\numberwithin{equation}{thm}
\newcommand{\be}{\begin{equation}}
\newcommand{\ee}{\end{equation}}
\renewcommand{\P}{\ensuremath{{\mathbb{P}}}}
\newcommand{\Q}{\ensuremath{{\mathbb{Q}}}}
\DeclareMathOperator{\Prep}{Prep}
\DeclareMathOperator{\Orb}{Orb}
\DeclareMathOperator{\Rat}{Rat}
\title{Counting points by height in semigroup orbits}
\author{Jason P. Bell, Wade Hindes, Xiao Zhong}
\date{April 2023}
\subjclass[2010]{Primary: 37P15; Secondary: 37P05, 37P55}
\keywords{arithmetic dynamics, semigroup dynamics, heights}
\begin{document}
\maketitle
\begin{abstract} We improve known estimates for the number of points of bounded height in semigroup orbits of polarized dynamical systems. In particular, we give exact  asymptotics for generic semigroups acting on the projective line. The main new ingredient is the Wiener-Ikehara Tauberian theorem, which we use to count functions in semigroups of bounded degree.

%We improve known asymptotics for the number of points of bounded height in semigroup orbits of polarized dynamical systems. The main new ingredient is the Wiener-Ikehara Tauberian theorem, which we use to count functions in semigroups of bounded degree. 
\end{abstract} 
\section{Introduction}
A general principle in arithmetic dynamics is that the arithmetic behavior of orbits of rational points of algebraic varieties under self-maps can often be understood in terms of dynamical properties of the associated dynamical systems.  In particular, dynamical properties of algebraic dynamical systems often put strong constraints on the shapes of orbits of rational points.  It is then natural to study the extent to which measures of complexity of orbits of rational points are reflected by the corresponding measures for complexity of dynamical systems.  Typically, one uses some notion of asymptotic behavior of heights in orbits as a measure of the complexity of an orbit and uses some notion of degree for iterates to measure the complexity of a dynamical system.  The Kawaguchi-Silverman conjecture \cite{KS}, which asserts that the dynamical degree should determine the height growth for Zariski dense orbits of $\overline{\mathbb{Q}}$-points, is perhaps the most striking prediction concerning the connection between these seemingly unrelated notions of complexity.  Their conjecture has since been verified in a number of cases \cite{KSConj1,KSConj2,KSConj3,KSConj4,KSConj5}, but remains open.

In this paper, we build on earlier work of the second-named author \cite{Wade:MathZ}, and investigate the growth of heights under semigroup orbits. More precisely, we consider the setting when one has a projective variety $V$ and a finite set $S=\{\phi_1,\ldots ,\phi_r\}$ of regular self-maps defined over a number field $K$ that are polarizable with respect to a common ample invertible sheaf; \emph{i.e.}, there is an ample invertible sheaf $\mathcal{L}$ and integers $d_1,\ldots ,d_r\ge 2$ such that $\phi_i^*(\mathcal{L}) \cong \mathcal{L}^{\otimes d_i}$ for $i=1,\ldots ,r$.  We note that this polarizable condition holds for every set of regular maps that are not automorphisms when $V=\mathbb{P}^N$ with $\mathcal{L}=\mathcal{O}(1)$. Now let $M_S$ denote the semigroup of endomorphisms of $V$ generated by $S$ under composition. In particular, when $S$ is polarized one has a natural notion of degree for each $f\in M_S$ by declaring that the degree of $f$, denoted by ${\rm deg}_{\mathcal{L}}(f)$, is the unique integer $d$ such that $f^*\mathcal{L}\cong\mathcal{L}^{\otimes d}$. Likewise, we may choose a height function $h_{\mathcal{L}}:V\rightarrow\mathbb{R}$ associated to $\mathcal{L}$, and to any point $P\in V(\overline{\mathbb{Q}})$ study the \emph{orbit growth} of $P$, which is the function
$$\#\{f\in M_S \colon h_{\mathcal{L}}(f(P))\le X\}.$$  Then the general principle stated above suggests that the asymptotics of the orbit growth should be heavily constrained by the asymptotic behavior of the corresponding \emph{degree growth} function
$$\#\{f\in M_S \colon {\rm deg}_{\mathcal{L}}(f)\le X\}.$$
It is relatively straightforward to give upper bounds for the orbit growth in terms of the degree growth.  Indeed, Kawaguchi and Silverman \cite{KS} showed that the analogous phenomenon holds for dominant rational self-maps. On the other hand, it is very difficult in general to give strong lower bounds for the orbit growth in terms of the degree growth.  The reason for this is that distinct maps in $M_S$ can send a point $P$ to a common point $Q$ and this sort of ``collapse'' can, in some cases, be extreme.  For example, if $P$ is a common fixed point of all elements of $S$ then the orbit growth function is $O(1)$, while the degree growth function can be exponential in $X$.  

In an earlier work, the second-named author \cite{Wade:MathZ} considered orbit growth and degree growth for semigroups of polarizable morphisms.  In this paper, we extend those results by getting optimal asymptotics for degree growth and getting optimal asymptotics for orbit growth for certain classes of semigroups of morphisms of the projective line. However, similar to the case of iterating a single function, we must avoid preperiodic points, defined below. 
\begin{defn} Let $S=\{ \phi_1,\ldots ,\phi_r\}$ be a set of endomorphisms on a variety $V$.  We say that a point $P\in V$ is \emph{preperiodic} for $M_S$ if there exist $f,g\in M_S$, with $g$ not the identity, such that $g\circ f(P)=f(P)$ 
\end{defn}   
Our first main result is the following. In what follows, given real functions $F$ and $G$, we write $F\ll G$ if there is a constant $C$ such that $F(x)\leq C\cdot G(x)$ holds for all $x$ sufficiently large. Likewise, we write $F\asymp G$ to mean both $F\ll G$ and $G\ll F$. Finally, we write $F\sim G$ to mean $\displaystyle{\lim_{x\rightarrow\infty} F(x)/G(x)=1}$.   
\begin{thm}{\label{thm:functioncount}} Let $V$ be a projective variety, let $S:=\{\phi_1,\dots,\phi_r\}$ be a polarized set of endomorphisms on $V$, and say $\phi_i^*\mathcal{L}\cong\mathcal{L}^{\otimes d_i}$ for some $d_i>1$. Moreover, assume that $r\geq2$ and let $\rho$ be the unique real number satisfying
\[\frac{1}{d_1^\rho}+\dots+\frac{1}{d_r^\rho}=1.\]
Then the following statements hold for all non-preperiodic $P\in V$:\vspace{.25cm}
\begin{enumerate}
\item[\textup{(1)}] In all cases, $\#\{f\in M_S\,:\, h_{\mathcal{L}}(f(P))\leq X\}\ll X^{\rho}$. \vspace{.3cm} 
\item[\textup{(2)}] If $M_S$ contains a free, finitely generated sub-semigroup, then there is a positive constant $\rho'\leq \rho$ such that  \vspace{.15cm} 
\[X^{\rho'}\ll \;\#\{f\in M_S\,:\, h_{\mathcal{L}}(f(P))\leq X\}\ll X^{\rho}.\vspace{.3cm}\]
\item[\textup{(3)}] If $M_S$ is free, then $\#\{f\in M_S\,:\, h_{\mathcal{L}}(f(P))\leq X\}\asymp X^{\rho}$. \vspace{.3cm} 
\item[\textup{(4)}] If $M_S$ is free and the $d_1,\ldots ,d_r$ are not all pairwise multiplicatively dependent, then there is a positive constant $c$ such that \vspace{.1cm} 
\[\#\{f\in M_S\,:\, h_{\mathcal{L}}(f(P))\leq X\}\sim cX^{\rho}.\vspace{.25cm}\]
\end{enumerate}
Here in all cases, the implicit constants depend on $P$ and $S$. 
%In particular, if $V=\mathbb{P}^1$ and $\hat{h}_\phi\neq\hat{h}_\psi$ for some $\phi,\psi\in S$, then the bounds in \textup{(2)} hold by \cite{Tits}.    
\end{thm}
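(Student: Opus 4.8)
The plan is to reduce all four parts to a combinatorial count of \emph{words} in the generators weighted by degree, which the Wiener--Ikehara theorem handles, and then transfer back to heights via a comparison between $h_{\mathcal{L}}(f(P))$ and $\deg_{\mathcal{L}}(f)$. First I would normalize $h_{\mathcal{L}}\geq 1$, set $C:=\max_i\sup_{Q}\bigl|h_{\mathcal{L}}(\phi_i(Q))-d_i h_{\mathcal{L}}(Q)\bigr|<\infty$ (functoriality of heights), and record the telescoping estimates $\deg_{\mathcal{L}}(f)\,(h_{\mathcal{L}}(Q)-C)\leq h_{\mathcal{L}}(f(Q))\leq \deg_{\mathcal{L}}(f)\,(h_{\mathcal{L}}(Q)+C)$, valid for all $f\in M_S$ and all $Q$ (using $d_i\geq 2$ to sum the geometric series of error terms). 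In particular $\{h_{\mathcal{L}}\geq 2C\}$ is forward invariant and on it $h_{\mathcal{L}}(f(Q))\geq\tfrac12\deg_{\mathcal{L}}(f)\,h_{\mathcal{L}}(Q)$, while backwards one gets $h_{\mathcal{L}}(Q)\leq\tfrac12\bigl(h_{\mathcal{L}}(\phi_i(Q))+C\bigr)$.

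The role of non-preperiodicity is captured by the claim that \emph{for every $B$, only finitely many $f\in M_S$ have $h_{\mathcal{L}}(f(P))\leq B$}. I would prove it by contradiction: $M_S$ has only finitely many elements of any bounded degree, so infinitely many such $f$ would give $\deg_{\mathcal{L}}(f)\to\infty$ with $f(P)$ confined to a finite set (Northcott, after enlarging $K$ to contain $P$); running the backwards estimate along each composition shows that all intermediate points of the orbit computations $P\to\cdots\to f(P)$ lie in a single finite set $T$, so for $\deg_{\mathcal{L}}(f)$ large the corresponding walk in $T$ repeats a vertex, producing $g,h\in M_S$ with $h$ a nonempty word (hence $h\neq\mathrm{id}$, since $\deg_{\mathcal{L}}(h)\geq 2$) and $h(g(P))=g(P)$ --- contradicting that $P$ is not preperiodic. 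Feeding this into the estimates (split off from each $f$ the shortest prefix landing in $\{h_{\mathcal{L}}\geq 2C\}$, whose degree is then bounded in terms of $P,S$), I obtain constants $0<c_1\leq c_2$ and $D_1$, depending on $P,S$, with $h_{\mathcal{L}}(f(P))\leq c_2\deg_{\mathcal{L}}(f)$ for all $f$ and $h_{\mathcal{L}}(f(P))\geq c_1\deg_{\mathcal{L}}(f)$ whenever $\deg_{\mathcal{L}}(f)>D_1$ (the remaining $f$ being finite in number). I expect the cycle-extraction step to be the main obstacle for parts (1)--(3).

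Next I would study $W(Y):=\#\{\text{words in }\phi_1,\dots,\phi_r\text{ of degree}\leq Y\}$, whose generating Dirichlet series is $\sum_m(\#\{\text{words of degree }m\})\,m^{-s}=\bigl(1-\sum_i d_i^{-s}\bigr)^{-1}$: it has a simple pole at $s=\rho$, is holomorphic elsewhere on $\Re s\geq\rho$, and --- when the $d_i$ are not all pairwise multiplicatively dependent, so $\log d_i,\log d_j$ are $\mathbb{Q}$-independent for some pair --- a triangle-inequality argument rules out zeros of $1-\sum_i d_i^{-s}$ on $\Re s=\rho$ other than $s=\rho$. Wiener--Ikehara and its standard variants then give $W(Y)\ll Y^\rho$ always, $W(Y)\asymp Y^\rho$ unconditionally, and $W(Y)\sim\kappa Y^\rho$ with $\kappa=\bigl(\rho\sum_i(\log d_i)d_i^{-\rho}\bigr)^{-1}$ under the independence hypothesis. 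Then (1) is immediate: $\#\{f:h_{\mathcal{L}}(f(P))\leq X\}\leq O(1)+W(X/c_1)\ll X^\rho$. For (2), a free sub-semigroup on $s\geq 2$ generators $\psi_1,\dots,\psi_s$ embeds in $M_S$ and, being free, has word count $\asymp Y^{\rho'}$ where $\sum_i(\deg_{\mathcal{L}}\psi_i)^{-\rho'}=1$; that count is dominated by $W$, giving $\rho'\leq\rho$, and combined with $h_{\mathcal{L}}(f(P))\leq c_2\deg_{\mathcal{L}}(f)$ it forces $\#\{f:h_{\mathcal{L}}(f(P))\leq X\}\gg X^{\rho'}$. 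Part (3) is part (2) with the free sub-semigroup equal to $M_S$, so $\rho'=\rho$, matching the upper bound from (1).

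For part (4) the real work lies in the fact that \textbf{$h_{\mathcal{L}}(f(P))/\deg_{\mathcal{L}}(f)$ need not converge as $\deg_{\mathcal{L}}(f)\to\infty$: it depends on the ``tail'' of the word $f$, i.e.\ the generators applied to $P$ first}, so one cannot directly invoke $W(Y)\sim\kappa Y^\rho$. My plan is to group the elements of $M_S$ by their tail. Since $M_S$ is free, each $f$ with $|f|\geq\ell$ factors uniquely as $f=\tau\circ\sigma$ with $\sigma$ the length-$\ell$ suffix; setting $P_\sigma:=\sigma(P)$ and using $h_{\mathcal{L}}(P_\sigma)\geq c_1\deg_{\mathcal{L}}(\sigma)\geq c_1 2^\ell$ together with the first-paragraph estimate applied to $\tau$, one gets $h_{\mathcal{L}}(\tau(P_\sigma))=\deg_{\mathcal{L}}(\tau)\,h_{\mathcal{L}}(P_\sigma)\bigl(1+O(2^{-\ell})\bigr)$ uniformly in $\tau$, so with $\beta_\ell=O(2^{-\ell})$,
\[
\sum_{|\sigma|=\ell} W\!\left(\tfrac{X}{(1+\beta_\ell)\,h_{\mathcal{L}}(P_\sigma)}\right)\;\leq\;\#\{f:|f|\geq\ell,\ h_{\mathcal{L}}(f(P))\leq X\}\;\leq\;\sum_{|\sigma|=\ell} W\!\left(\tfrac{X}{(1-\beta_\ell)\,h_{\mathcal{L}}(P_\sigma)}\right).
\]
Each outer sum is finite ($r^\ell$ terms), so $W(Y)\sim\kappa Y^\rho$ turns it into $\kappa\,(1\mp\beta_\ell)^{-\rho}X^\rho A_\ell(1+o_X(1))$ where $A_\ell:=\sum_{|\sigma|=\ell}h_{\mathcal{L}}(P_\sigma)^{-\rho}$. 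It remains to see $A_\ell$ converges: from the two-sided comparison and $\sum_i d_i^{-\rho}=1$ (which also gives $\sum_{|\sigma|=\ell}\deg_{\mathcal{L}}(\sigma)^{-\rho}=1$) one has $c_2^{-\rho}\leq A_\ell\leq c_1^{-\rho}$ for large $\ell$, and refining a length-$\ell$ suffix to length $\ell+1$ multiplies its term by $\sum_i d_i^{-\rho}\bigl(1+O(2^{-\ell})\bigr)=1+O(2^{-\ell})$, so $A_{\ell+1}=A_\ell\bigl(1+O(2^{-\ell})\bigr)$ and $A:=\lim_\ell A_\ell\in(0,\infty)$. Since words of length $<\ell$ contribute $O(1)$ and $\rho>0$, letting $\ell\to\infty$ in the displayed bounds gives $\#\{f:h_{\mathcal{L}}(f(P))\leq X\}\sim cX^\rho$ with $c=\kappa A$. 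The step I expect to be most delicate is making the suffix argument uniform --- controlling all the $O(2^{-\ell})$'s independently of $X$ and $\tau$ and then legitimately interchanging the limits $X\to\infty$ and $\ell\to\infty$.
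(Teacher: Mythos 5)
Your proposal is correct and follows essentially the same route as the paper: the same Tate-style height--degree comparison, the same Northcott/pigeonhole use of non-preperiodicity, the same Dirichlet-series/Wiener--Ikehara count of words of bounded multiplicative weight, and, for part (4), the same decomposition over length-$\ell$ prefixes together with the convergence of $A_\ell=\sum_{|\sigma|=\ell}h_{\mathcal{L}}(\sigma(P))^{-\rho}$, which is exactly the paper's Lemma \ref{lem:beta}. The one caveat is that the unconditional bound $W(Y)\asymp Y^{\rho}$ needed for parts (1)--(3) does not follow from Wiener--Ikehara itself, whose hypothesis fails when the $d_i$ are all pairwise multiplicatively dependent (the generating function then has infinitely many poles on $\Re s=\rho$); you would need either a Karamata-type one-sided Tauberian theorem or, as the paper does in Proposition \ref{thm:wordcount}, an elementary induction on the functional equation $W(x)=\sum_i W(x/d_i)$.
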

\begin{rem} In particular, if the individual sets of preperiodic points $\Prep(\phi)$ of each $\phi\in S$ are not all identical, then we obtain bounds of the form 
\[X^{\rho'}\ll \;\#\{f\in M_S\,:\, h_{\mathcal{L}}(f(P))\leq X\}\ll X^{\rho}\] 
for all non-preperiodic $P \in V$ by \cite[Theorem 1.3]{Tits}. 
\end{rem} 
\begin{rem} In fact, an exact asymptotic as in statement (4) of Theorem \ref{thm:functioncount} is not possible when the degrees of the maps in $S$ are all powers of a single integer. For a concrete example, consider the set of polynomials $S=\{2x^8, 3x^8\}$ and the point $P=1$. Note that $\rho=1/3$ in this case. Now for $X>0$, define the function \[\Theta(X)=\#\{f\in M_S\,:\, h(f(P))\leq X\}/X^{1/3},\] 
where $h(\cdot)$ is the usual Weil height on $\mathbb{P}^1$. Then one can show that 
\[0<\liminf \Theta(X) < \limsup \Theta(X) <\infty\] 
in this case.    
\end{rem} 
On the other hand, since every point in the total \emph{semigroup orbit} of $P$, \[\Orb_S(P):=\{f(P)\,:\,f\in M_S\},\] 
of height at most $X$ is determined by at least one function $f\in M_S$ satisfying $h_{\mathcal{L}}(f(P))\leq X$, we easily obtain a general upper bound for the number of points of bounded height in non-preperiodic orbits. However, with some additional work, we can prove that such an upper bound holds for all orbits.  
\begin{thm}\label{thm:universalupperbd} Let $V$ be a projective variety, let $S:=\{\phi_1,\dots,\phi_r\}$ be a polarized set of endomorphisms on $V$. Moreover, assume that $r\geq2$ and that $\rho$ and $\mathcal{L}$ are as in Theorem \ref{thm:functioncount}. 
Then  
\[\#\{Q\in\Orb_S(P)\,:\, h_{\mathcal{L}}(Q)\leq X\}\ll X^{\rho}\]
holds for all points $P\in V$.  
\end{thm}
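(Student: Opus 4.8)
The plan is to deduce this from Theorem~\ref{thm:functioncount}(1). Even though $P$ itself may be preperiodic, I will show that every point of $\Orb_S(P)$ of sufficiently large height already lies in the forward semigroup orbit of one of only \emph{finitely many} non-preperiodic points, and then apply the function-counting bound to each of those. For the setup, the polarization hypotheses $\phi_i^*\mathcal L\cong\mathcal L^{\otimes d_i}$ give a constant $C=C(S,\mathcal L)$ with $|h_{\mathcal L}(\phi_i(Q))-d_ih_{\mathcal L}(Q)|\le C$ for all $Q$ and $i$; iterating and summing a geometric series (using $d_i\ge 2$) yields, for every $f\in M_S$,
\[h_{\mathcal L}(f(Q))\ \ge\ \deg_{\mathcal L}(f)\bigl(h_{\mathcal L}(Q)-C\bigr).\]
After normalizing so that $h_{\mathcal L}\ge 0$, this has two consequences: (i) if $y$ is fixed by some non-identity $g\in M_S$, then $D:=\deg_{\mathcal L}(g)\ge 2$ and $h_{\mathcal L}(y)=h_{\mathcal L}(g(y))\ge Dh_{\mathcal L}(y)-DC$, so $h_{\mathcal L}(y)\le\tfrac{D}{D-1}C\le 2C$; (ii) consequently, if $R$ is preperiodic for $M_S$ then, applying (i) to $y=f(R)$ for the relevant $f\in M_S$ together with $h_{\mathcal L}(f(R))\ge h_{\mathcal L}(R)-C$, we get $h_{\mathcal L}(R)\le 3C$. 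In particular, every point of height $>3C$ is non-preperiodic for $M_S$.

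Now fix any real $T>3C$ and put $\mathcal B:=\{Q\in\Orb_S(P):h_{\mathcal L}(Q)\le T\}$. Since $P$ and the $\phi_i$ are defined over a fixed number field $L$, we have $\Orb_S(P)\subseteq V(L)$, so $\mathcal B$ is finite by Northcott's theorem. Let $\mathcal R$ be the set of points of height $>T$ belonging to $\{P\}\cup\{\phi_i(R):R\in\mathcal B,\ 1\le i\le r\}$; then $\mathcal R$ is finite, $\mathcal R\subseteq\Orb_S(P)$, and by the previous paragraph every element of $\mathcal R$ is non-preperiodic. The key claim is that $\{Q\in\Orb_S(P):h_{\mathcal L}(Q)>T\}\subseteq\bigcup_{R'\in\mathcal R}\Orb_S(R')$. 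Indeed, given such a $Q$, write $Q=f(P)$ with $f=\phi_{i_1}\circ\cdots\circ\phi_{i_n}\in M_S$ and trace the orbit of $P$ forward along $f$, obtaining a chain $P=R_0,R_1,\dots,R_n=Q$ in which each $R_j$ is the image of $R_{j-1}$ under one of the $\phi_i$. If $h_{\mathcal L}(P)>T$ then $P\in\mathcal R$ and $Q\in\Orb_S(P)$, so we are done; otherwise let $m$ be the largest index with $h_{\mathcal L}(R_m)\le T$ (it exists, since $h_{\mathcal L}(R_0)\le T$, and $m\le n-1$). Then $R_{m+1}$ has height $>T$, is of the form $\phi_i(R_m)$ with $R_m\in\mathcal B$, hence lies in $\mathcal R$, and $Q$ is the image of $R_{m+1}$ under the remaining segment $\phi_{i_1}\circ\cdots\circ\phi_{i_{n-m-1}}$ of $f$, so $Q\in\Orb_S(R_{m+1})$.

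Combining these, and using that every point of $\Orb_S(R')$ of height $\le X$ equals $f(R')$ for at least one $f\in M_S$ with $h_{\mathcal L}(f(R'))\le X$, we get
\begin{align*}
\#\{Q\in\Orb_S(P):h_{\mathcal L}(Q)\le X\}&\le|\mathcal B|+\sum_{R'\in\mathcal R}\#\{Q\in\Orb_S(R'):h_{\mathcal L}(Q)\le X\}\\
&\le|\mathcal B|+\sum_{R'\in\mathcal R}\#\{f\in M_S:h_{\mathcal L}(f(R'))\le X\}.
\end{align*}
Each $R'\in\mathcal R$ is non-preperiodic, so Theorem~\ref{thm:functioncount}(1) bounds the corresponding term by $O(X^{\rho})$; as $\mathcal R$ is finite, the whole expression is $O(X^{\rho})$, as desired. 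I expect the crux to be the finiteness of $\mathcal R$, i.e.\ the fact that once the height of an orbit point exceeds the threshold $T$ it can never be driven below $T$ again and the point must already be one of boundedly many non-preperiodic points; this is exactly what consequence (i) --- fixed points of nontrivial elements of $M_S$ have uniformly bounded height --- provides, and everything else is routine height-machine bookkeeping together with the appeal to Theorem~\ref{thm:functioncount}(1). Alternatively, one can bypass Theorem~\ref{thm:functioncount}(1): in the decomposition $Q=g(R')$ above one automatically has $\deg_{\mathcal L}(g)\le h_{\mathcal L}(Q)/(T-C)\ll X$, so the count is $\ll\#\{f\in M_S:\deg_{\mathcal L}(f)\le X\}$, which is $\ll X^{\rho}$ by the degree-growth estimate.
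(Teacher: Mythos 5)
Your argument is correct and is essentially the paper's proof: the paper's Lemma~\ref{lem:general} produces the same decomposition of $\Orb_S(P)$ into a finite low-height set together with finitely many orbits of non-preperiodic points of large height (it collects the first "exit points" in the band $B< h_{\mathcal L}\le 2dB$ via Northcott, where you instead take the images under the generators of the finite sub-threshold set --- an equivalent device), and then both proofs conclude by applying Theorem~\ref{thm:functioncount}(1) to each of the finitely many non-preperiodic base points. The only nit is the convention of whether $P$ itself lies in $\Orb_S(P)$, which you should absorb by adjoining $P$ to your set $\mathcal B$ when $h_{\mathcal L}(P)\le T$.
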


On the other hand, we prove an even stronger result for generic sets of maps on $V=\mathbb{P}^1$ and establish an asymptotic for the number of \emph{points} in an orbit of bounded height, instead of just an asymptotic for the number of \emph{functions} yielding a bounded height relation. In what follows, given a positive integer $d$, we let $\Rat_d$ be the space of rational functions in one variable of degree $d$; see \cite[\S4.3]{SilvDyn} for details. 
\begin{thm}\label{thm:generic} Let $d_1,\dots, d_r$ be integers all at least four. Moreover, assume that $r\geq2$ and let $\rho$ be the unique positive real number satisfying $\sum d_i^{-\rho}=1$. Then there is a proper Zariski open subset $U\subseteq\Rat_{d_1}\times\dots\times\Rat_{d_r}$ such that 
\[\#\{Q\in \Orb_S(P)\,:\, h(Q)\leq X\}\asymp X^{\rho}\] 
for all $S\in U$ and all $P$ such that $\Orb_S(P)$ is infinite. Moreover, if in addition the $d_1,\ldots ,d_r$ are not all pairwise multiplicatively dependent, then  
\[\#\{Q\in \Orb_S(P)\,:\, h(Q)\leq X\}\sim c X^{\rho}\]
for some positive constant $c$, which depends on $P$ and $S$.  
\end{thm}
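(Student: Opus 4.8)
The upper bound $\#\{Q\in\Orb_S(P):h(Q)\le X\}\ll X^{\rho}$ requires no genericity: it is exactly Theorem~\ref{thm:universalupperbd}, which already holds for every tuple $S$ and every point $P$. So the whole content of Theorem~\ref{thm:generic} is the matching lower bound, and its refinement to $\sim cX^{\rho}$. My plan is to deduce both from Theorem~\ref{thm:functioncount} in two steps: first reduce to non-preperiodic base points, then show that, for $S$ in a suitable Zariski-open set $U$, the orbit map $f\mapsto f(P)$ identifies only boundedly many functions over each point, so that the count of \emph{points} is comparable to the count of \emph{functions}.

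For the reduction: since $\Orb_S(P)$ lies in a fixed number field, Northcott's theorem makes $\{Q\in\Orb_S(P):h(Q)\le X\}$ finite for each $X$, so an infinite orbit has unbounded height. On the other hand, a telescoping estimate based on $h(\phi_i(Q))=d_i\,h(Q)+O(1)$ shows that the comparison constants between $h$ and the canonical heights $\widehat h_g$ are bounded uniformly over $g\in M_S$ (roughly, the constant for $\phi_w$ is $O(\deg\phi_w)$ while $\deg\phi_w$ grows geometrically in the word length); since any preperiodic $P$ maps under some $f\in M_S$ onto a fixed point of some $g\in M_S$, this forces $\Prep(M_S)$ to have bounded height, hence to meet our number field in a \emph{finite} set. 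So an infinite $\Orb_S(P)$ contains a non-preperiodic $P'=f_0(P)$, and as $\Orb_S(P')\subseteq\Orb_S(P)$ it suffices to prove the lower bounds for $P$ non-preperiodic.

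For the second step, take $U$ inside the locus on which $M_S$ is free (arrangeable on a nonempty Zariski-open set — the hypothesis $d_i\ge4$ is used here and below to guarantee enough moduli in each $\Rat_{d_i}$). Then Theorem~\ref{thm:functioncount}(3) gives $\#\{f\in M_S:h(f(P))\le X\}\asymp X^{\rho}$, upgraded by Theorem~\ref{thm:functioncount}(4) to $\sim c'X^{\rho}$ when the $d_i$ are not all pairwise multiplicatively dependent. The key geometric input I would then need is: after shrinking $U$, there is a constant $B=B(S,P)$ bounding every fibre of $f\mapsto f(P)$ over every $Q\in\Orb_S(P)$, for every non-preperiodic $P$, and moreover the number of $Q$ with $h(Q)\le X$ lying in a fibre of size $\ge2$ is $o(X^{\rho})$. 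Granting this, and writing $\mathcal F_X=\{f:h(f(P))\le X\}$, $\mathcal Q_X=\{Q\in\Orb_S(P):h(Q)\le X\}$, one has $|\mathcal F_X|=\sum_{Q\in\mathcal Q_X}|\mathrm{fib}(Q)|$, whence $|\mathcal Q_X|=|\mathcal F_X|-\sum_{|\mathrm{fib}(Q)|\ge2}(|\mathrm{fib}(Q)|-1)\ge|\mathcal F_X|/B$: this yields $\asymp X^{\rho}$, and combined with Theorem~\ref{thm:functioncount}(4) and the $o(X^{\rho})$ bound it yields $|\mathcal Q_X|\sim c'X^{\rho}$.

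To establish the geometric input I would first do right-cancellation: a coincidence $f(P)=g(P)$ with $f\ne g$ reduces, after deleting a maximal common right factor $\phi_s$, to $\phi_w(P')=\phi_{w'}(P')$ with $P'=\phi_s(P)\in\Orb_S(P)$ and $w,w'$ having distinct final letters; the case of one word being empty cannot occur for non-preperiodic $P$, since it would exhibit a point of $\Orb_S(P)$ as a fixed point of a nontrivial element of $M_S$. Freeness gives $\phi_w\ne\phi_{w'}$ as maps, so each such ``primitive'' coincidence places $(S,P')$ on a proper subvariety of $\Rat_{d_1}\times\cdots\times\Rat_{d_r}\times\mathbb P^1$; and since heights strictly increase once they pass the explicit threshold $\max_i c_i$, the path from $P$ to a point of height $\le X$ has at most $O(1)$ steps below that threshold (no repeats, else preperiodicity) and $O(\log X)$ above it, so any $w,w'$ witnessing a coincidence below height $X$ have length $\ll\log X$. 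The main obstacle — the step I expect to absorb most of the work — is then a uniform incidence/dimension count showing that a generic $S$ supports only boundedly many, and below height $X$ only $o(X^{\rho})$-many, such primitive coincidences: a priori there are infinitely many word pairs, each excluded by a \emph{different} proper Zariski-closed condition, and one is not allowed to take a countable union of closed sets, so one must instead package all of them into a single statement about the generic tuple $S$ (morally: that the orbit of $P$ fans out like a genuine $r$-branching tree with only finitely many identifications). Making that packaging work is precisely where the abundance of moduli afforded by $d_i\ge4$, together with $r\ge2$, is really needed.
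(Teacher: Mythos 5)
Your overall architecture (upper bound from Theorem~\ref{thm:universalupperbd}, reduction to non-preperiodic base points of large height, then comparison of the point count with the function count from Theorem~\ref{thm:functioncount}) matches the paper's. But the step you yourself flag as ``the main obstacle'' --- packaging infinitely many coincidence conditions $\phi_w(P')=\phi_{w'}(P')$, one proper closed condition per word pair, into a single Zariski-open statement about $S$ --- is a genuine gap, and your instinct that one cannot take a countable intersection of open conditions is exactly why your route stalls. The paper does not do a dimension/incidence count over the moduli space at all. Instead it imposes only \emph{finitely many} generic conditions: by Pakovich's theorems (this is where $d_i\ge 4$ enters), for $(\phi_1,\dots,\phi_r)$ in a Zariski-open set the curves $C_{\phi_i}=\{(x,y): (\phi_i(x)-\phi_i(y))/(x-y)=0\}$ and $C_{\phi_i,\phi_j}=\{\phi_i(x)=\phi_j(y)\}$ ($i\ne j$) are irreducible of genus at least $2$. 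Faltings' theorem then bounds the height of all $K$-rational points on these finitely many curves by some $B_0$, and this single arithmetic input controls \emph{every} coincidence at once: if $f(Q_1)=g(Q_2)$ with $f=\phi_i\circ f_0$, $g=\phi_j\circ g_0$ and either $i\ne j$ or $f_0(Q_1)\ne g_0(Q_2)$, then $(f_0(Q_1),g_0(Q_2))$ lies on one of these curves, forcing $h(f_0(Q_1)),h(g_0(Q_2))\le B_0$; but heights increase along orbits above the threshold of Lemma~\ref{lem:hts}, so this is impossible once $h(Q_1),h(Q_2)>B:=\max(B_0,B_1)$. Peeling off one letter at a time (the ``cancellation property'' of Definition~\ref{def:cancel} and Proposition~\ref{prop:sum}) then shows $f\mapsto f(Q)$ is genuinely \emph{injective} for $h(Q)>B$, not merely boundedly finite-to-one, and that the orbits $\Orb_S(Q_i)$ in the decomposition of Lemma~\ref{lem:general} can be made disjoint. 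Consequently the point count equals the sum of function counts exactly, with no need for your auxiliary $o(X^{\rho})$ estimate on large fibres or the division by a fibre bound $B$ (which, as written, would in any case only give $\asymp$ and not the asymptotic $\sim cX^{\rho}$).

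To repair your argument you should replace the unproved uniform incidence count with: (i) the Pakovich genus conditions defining $U$; (ii) Faltings' theorem to produce the height threshold $B$; and (iii) the letter-by-letter cancellation argument of Proposition~\ref{thm:fstops}(3) and Proposition~\ref{prop:sum} to convert bounded-height coincidences into exact injectivity of evaluation above height $B$.
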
 

\begin{rem} The constants in Theorem \ref{thm:generic} are not explicit, since they incorporate height bounds for rational points on curves. On the other hand, the subset $U$ is explicit and consists of all critically separate and critically simple sets of maps (cf. \cite[Theorem 1.5]{Wade:MathZ}); see section \ref{sec:P^1} for details.          
\end{rem}
\begin{rem} Similar hypotheses were recently used in \cite{Joe+Wade} to give strong lower bounds for the number of points in semigroup orbits modulo primes. Moreover, these results are analogous to known results for abelian varieties \cite{Murty}. With this analogy in mind, the number of points of height at most $X$ on an abelian variety $A$ over a number field $K$ is asymptotic to $X^{\rho/2}$, where $\rho$ is the rank of the Mordell-Weil group $A(K)$; see \cite[p.54]{Serre}. Thus the present paper, as well as the papers~\cite{Wade:MathZ,Joe+Wade}, suggest that the analogy 
\[
\left(\begin{tabular}{@{}l@{}}
arithmetic of points\\
of an abelian variety\\
\end{tabular}\right)
\quad\Longleftrightarrow\quad
\left(\begin{tabular}{@{}l@{}}
arithmetic of points in orbits\\
of a dynamical system\\
\end{tabular}\right)
\]
described in~\cite{MR4007163} and~\cite[\S6.5]{MR2884382} may be more accurate when the dynamical system on the right-hand side is generated by at least two non-commuting maps, rather than using orbits coming from iteration of a single map.    
\end{rem}
\subsection{Notation}
Throughout this paper, we use the following notation.  If $S$ is a set of self-maps, we let $M_S$ denote the semigroup generated by $S$ under composition.  If $M_S$ is free on the set $S$, we let $S_n$ (respectively, $S_{\le n}$) denote the set of $n$-fold compositions of elements of $S$ (respectively, elements that are compositions of at most $n$ elements of $S$), and for $f\in M_S$ we let $|f|$ denote the unique natural number $n$ such that $f$ is an $n$-fold composition of elements of $S$. 
\section{Counting functions in polarized systems}\label{sec:degrees}
\subsection{Functions of bounded degree}\label{degreegrowth} 
Let $V$ be a projective variety, and let $S=\{\phi_1,\dots,\phi_r\}$ be a set of endomorphisms on $V$. Moreover, we assume that $S$ is \emph{polarized} with respect to some ample line bundle $\mathcal{L}$ on $V$, meaning that there are $d_1,\dots, d_r>1$ satisfying $\phi_i^*\mathcal{L}\cong\mathcal{L}^{\otimes d_i}$. In particular, we may defined the $\mathcal{L}$-degree (or simply degree when $\mathcal{L}$ is understood) of a function $f$ in the semigroup generated by $S$ as follows: 
\[\qquad\qquad\deg_{\mathcal{L}}(f):=\prod_{j=1}^n d_{i_j}\qquad \text{for $f=\phi_{i_1}\circ\dots\circ \phi_{i_n}\in M_S$}\] 
In particular, $f^*\mathcal{L}\cong\mathcal{L}^{\otimes\deg_{\mathcal{L}}(f)}$ follows from standard results in algebraic geometry. On the other hand, if we fix a height function associated to $\mathcal{L}$ on $V$ and a point $P\in V$, then the height of $f(P)$ tends to grow according to $\deg_\mathcal{L}(f)$ (a fact used by Call and Silverman to define canonical heights). Therefore, as a first step in counting points of bounded height in orbits, we count functions of bounded degree. In fact, we will prove in this section that if $M_S$ is free, then there are positive constants $\rho$, $c_1$, and $c_2$ (depending only on the initial degrees $d_1,\dots d_r$) such that \vspace{.1cm} 
\begin{equation}\label{cor:functionount} 
c_1 X^{\rho}\leq\#\{f\in M_S\,:\, \deg_{\mathcal{L}}(f)\leq X\}\leq c_2 X^{\rho}
\vspace{.1cm} 
\end{equation}  
for all $X$ sufficiently large. Here, the constant $\rho$ is the unique positive real number satisfying \[\frac{1}{d_1^\rho}+\dots+\frac{1}{d_r^\rho}=1.\] 
In fact, we can give more exact asymptotics. However, the form of these asymptotics turns
out to depend on whether or not the degrees of the maps in $S$ are all powers of a single integer, and so we put off stating these more precise results until later.  

To begin, we collect some basic facts about counting words in free semigroups of bounded, \emph{multiplicative} weight. This problem is very similar to a classical arithmetic problem of counting bounded multiplicative compositions of integers \cite{Erdos, Survey}, though not exactly the same, since we want to allow factors of equal size (precisely to allow maps of equal degree in our main dynamical theorems). In particular, due to the lack of an exact reference, we include the relevant statements and proofs here. Let us fix some notation. Let $S=\{\alpha_1,\dots,\alpha_r\}$ be an alphabet, and let $F_S$ be the set of all finite words generated by $S$. Then $F_S$ is free semigroup under concatenation. Given a vector $\pmb{d}=(d_1,\dots, d_r)$ of positive real weights, define a multiplicative weight function $|\cdot|_{\pmb{d}}: F_S\rightarrow \mathbb{R}$ given by 
\begin{equation}\label{eq:weight} 
\quad\quad\qquad |\omega|_{\pmb{d}}:=\prod_{j=1}^md_{i_j},\qquad \omega=\alpha_{i_1}\cdots\alpha_{i_m}\in F_S.
\end{equation} 
Then, given a real number $X$, we would like to obtain an asymptotic formula for the number of words of weight at most $X$,
\begin{equation}\label{wtatmostx}
\#\{\omega\in F_S\,:\, |\omega|_{\pmb{d}}\leq X\}.  
\end{equation} 
In particular, we have the following general estimate, which assumes only that $r\geq2$ and that $\min\{d_i\}>1$. 
\begin{rem} Strictly speaking, the result below is not necessary for proving our main theorems in this paper; it is superseded by the asymptotics given in Theorem \ref{thm:dirichlet}. Nevertheless, we include this result since it may be useful for future work. For one, the more precise asymptotics to come assume integral weights, and non-integral weights do appear elsewhere in dynamics \cite{Kawaguchi,SilvK3}. Hence, the result below may still be useful in a more general setting.       
\end{rem}
\begin{prop}\label{thm:wordcount} Let $S=\{\alpha_1,\dots,\alpha_r\}$ be an alphabet, let $F_S$ be the free semigroup generated by $S$, and let $\pmb{d}=(d_1,\dots, d_r)$ be a vector of positive weights. Assume that $r\geq2$, that $\min\{d_1,\dots,d_r\}>1$, and let $\rho$ be the unique real number satisfying $d_1^{-\rho}+\dots+d_r^{-\rho}=1$. Then there are positive constants $c_1$ and $c_2$ such that \vspace{.1cm} 
\[c_1\, X^{\rho}\leq \#\{\omega\in F_S\,:\, |\omega|_{\pmb{d}}\leq X\}\leq c_2\, X^{\rho} \vspace{.05cm} \]  
holds for all $X$ sufficiently large.    
\end{prop}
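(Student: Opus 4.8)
The plan is to count words of weight at most $X$ by grouping them according to their length $m$ and tracking how the weight distributes across the letters. First I would set up the counting function $N(X) := \#\{\omega\in F_S : |\omega|_{\pmb d}\leq X\}$ and note that taking logarithms turns the multiplicative weight into an additive one: writing $w_i := \log d_i > 0$, a word $\omega = \alpha_{i_1}\cdots\alpha_{i_m}$ has $|\omega|_{\pmb d}\leq X$ iff $w_{i_1}+\dots+w_{i_m}\leq \log X$. So $N(X)$ is the number of finite sequences from the multiset of positive reals $\{w_1,\dots,w_r\}$ with partial sum at most $T:=\log X$; I would prove the equivalent statement that this count is $\asymp e^{\rho T}$.

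For the \textbf{upper bound}, the natural device is a generating-function / Dirichlet-series estimate. Consider $\sum_{\omega\in F_S} |\omega|_{\pmb d}^{-s}$; since $F_S$ is free, this factors as a geometric series $\sum_{m\geq 0}\bigl(\sum_i d_i^{-s}\bigr)^m = \bigl(1 - \sum_i d_i^{-s}\bigr)^{-1}$, which converges precisely for real $s > \rho$ (here $\min d_i > 1$ guarantees $\sum_i d_i^{-s}$ is a strictly decreasing continuous function of $s$ running from $r>1$ down to $0$, so $\rho$ exists and is unique). Fixing any $s_0 > \rho$, each word with $|\omega|_{\pmb d}\leq X$ contributes at least $X^{-s_0}$ to the sum, whence $N(X)\leq X^{s_0}\cdot\bigl(1-\sum_i d_i^{-s_0}\bigr)^{-1}$. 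To upgrade the exponent from an arbitrary $s_0 > \rho$ to $\rho$ itself, I would instead apply the standard Rankin trick at $s = \rho$ with a dyadic decomposition, or simply observe that one can bound $N(X)$ by choosing $s_0 = \rho + 1/\log X$ and checking that $\bigl(1 - \sum_i d_i^{-s_0}\bigr)^{-1} = O(\log X)$ while $X^{s_0} = e\cdot X^\rho$ — this only gives $N(X)\ll X^\rho\log X$, so to get the clean $X^\rho$ I would instead run the Rankin argument: split words by the dyadic range of their weight and sum a geometric series in $2^{-\rho}$.

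For the \textbf{lower bound}, I would produce enough words of weight at most $X$ directly. Let $d_{\max} = \max_i d_i$. For each $m$, every word of length $m$ has weight at most $d_{\max}^m$, so all $r^m$ words of length $m = \lfloor \log X / \log d_{\max}\rfloor$ satisfy $|\omega|_{\pmb d}\leq X$; this already gives $N(X)\gg X^{(\log r)/(\log d_{\max})}$. Since $\sum_i d_i^{-\rho}=1$ and $\min d_i > 1$ force $d_{\max}^{-\rho} < 1 \leq r\cdot d_{\max}^{-\rho}$, i.e. $(\log r)/(\log d_{\max})\geq \rho$ would be false in general — so this crude bound is too weak. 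Instead I would use a more balanced family: for a word with $k_i$ occurrences of $\alpha_i$ (total length $m = \sum k_i$), the weight is $\prod_i d_i^{k_i}$, and there are $\binom{m}{k_1,\dots,k_r}$ such words. Choosing $k_i \approx \theta_i m$ with $\theta_i := d_i^{-\rho}$ (so $\sum\theta_i = 1$), the weight is $\approx e^{\rho m \sum_i \theta_i \log d_i}$ and the multinomial coefficient is $\approx e^{m H(\theta)}$ by Stirling, where $H$ is the entropy; a short computation shows $H(\theta) = \rho\sum_i\theta_i\log d_i$ exactly (this is the Gibbs/variational characterization of $\rho$), so for $m$ chosen with $m\sum_i\theta_i\log d_i \approx \log X$ one gets $\gg X^\rho/\mathrm{poly}(\log X)$ words of weight $\leq X$. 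The polynomial loss from Stirling can be absorbed by summing over a window of nearby $m$, or eliminated with the same Dirichlet-series / Tauberian bookkeeping used for the upper bound.

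The \textbf{main obstacle} is getting the exponent exactly right in both directions — i.e., verifying the identity $H(d^{-\rho}) = \rho\sum_i d_i^{-\rho}\log d_i$ that makes the entropy of the "right" letter frequencies match $\rho$, and then controlling the sub-exponential (polynomial-in-$\log X$) factors cleanly enough to land on $c_1 X^\rho \leq N(X)\leq c_2 X^\rho$ rather than something off by logs. The cleanest path is to prove the upper bound by the Rankin trick and the lower bound by the multinomial/entropy estimate with an $m$-window, but since the next theorem (Theorem \ref{thm:dirichlet}) gives the sharp asymptotics via Wiener--Ikehara anyway, a slightly lossy argument here followed by the remark that it is superseded is also acceptable.
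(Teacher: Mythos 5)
Your overall strategy (Dirichlet series plus Rankin for the upper bound, a multinomial/entropy count for the lower bound) is quite different from the paper's, and as written it has genuine gaps in both directions; you correctly identify the sub-exponential losses but the fixes you propose do not close them. For the upper bound, Rankin at $s_0=\rho+1/\log X$ stalls at $X^{\rho}\log X$ because $\bigl(1-\sum_i d_i^{-s}\bigr)^{-1}\sim C/(s-\rho)$, and the ``dyadic decomposition'' remedy is circular: summing a geometric series in $2^{-\rho}$ over dyadic blocks requires first knowing that the number of words of weight in $[2^k,2^{k+1})$ is $\ll 2^{k\rho}$, which is exactly the bound being proved, and the Dirichlet series only gives $2^{ks}$ for $s>\rho$ on each block. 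For the lower bound, the identity $H(\theta)=\rho\sum_i\theta_i\log d_i$ with $\theta_i=d_i^{-\rho}$ is correct, but Stirling leaves a factor $m^{-(r-1)/2}\asymp(\log X)^{-(r-1)/2}$, and summing over a window of nearby lengths $m$ does not recover it (those terms decay geometrically and are dominated by the top term); one would instead need a local-CLT count over $\asymp m^{(r-1)/2}$ nearby frequency vectors, which you do not carry out. The fallback of ``Tauberian bookkeeping'' also does not rescue the general statement: the weights here are arbitrary reals $>1$, so the integer-indexed Wiener--Ikehara theorem of Theorem \ref{thm:dirichlet} does not apply, and in the cyclic case the boundary line $\Re(s)=\rho$ carries additional poles, so no exact asymptotic is available (cf.\ the paper's Remark on $S=\{2x^8,3x^8\}$) even though the two-sided bound still holds.

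The idea you are missing is the paper's renewal-type functional equation: with $w(x)$ the number of words of weight $\le x$, freeness gives $w(x)=1+\sum_{i=1}^r w(x/d_i)$ for $x\ge 1$, and after the shift $W(x)=w(x)+\tfrac{1}{r-1}$ this becomes the homogeneous recursion $W(x)=\sum_i W(x/d_i)$. Since $\sum_i c(x/d_i)^{\rho}=cx^{\rho}$ by the definition of $\rho$, two-sided bounds $c_1x^{\rho}\le W(x)\le c_2x^{\rho}$, once checked on the base window $1\le x\le d_r$, propagate by induction over the windows $1\le x\le d_r d_1^{n}$ with no loss of constants whatsoever. This is both shorter and strictly stronger than what your sketch delivers, and it works for arbitrary real weights $d_i>1$. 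If you want to keep your analytic route, you would need either the local-CLT refinement of the entropy bound or a bootstrapping of the Rankin bound through the same recursion --- at which point the direct induction is clearly the better path.
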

\begin{proof} Let $w(x)$ be the number of words in $F_S$ of weight at most $x$, where we take the weight of the empty word to be $1$. Then,
\[ w(x)=\begin{cases} 
      0 & \text{if}\;0\leq x<1, \\
      1+\displaystyle{\sum_{i=1}^rw\Big(\frac{x}{d_i}\Big)}&\text{if}\;1\leq x<\infty.
   \end{cases}
\] 
Indeed, there are no words of weight less than $1$, and any non-empty word $\omega$ of weight at most $x$ is uniquely of the form $\omega=\alpha_{i_1}\omega'$ for some word $\omega'$ of weight at most $x/d_{i_1}$ (and there are exactly $w(x/d_{i_1})$ such $\omega'$). Now let $W(x):=w(x)+\frac{1}{r-1}$ and note that it suffices to show that $W(x)\asymp x^{\rho}$, to conclude the same for $w(x)$; the reason we consider $W$ instead of $w$ is that it satisfies a slightly simpler recursion: 
\begin{equation}\label{eq:functional}
W(x)=\begin{cases} 
      1 & \text{if}\;0\leq x<1, \\
      \displaystyle{\sum_{i=1}^rW\Big(\frac{x}{d_i}\Big)}&\text{if}\;1\leq x<\infty,
   \end{cases}
\end{equation}  
which follows easily from $w$'s functional equation. Now assume that the $d$'s are arranged in non-decreasing order, $d_1\leq \dots\leq d_r$, and let $c_1=W(1)/d_r^\rho$ and $c_2=W(d_r)$. We will show that \vspace{.1cm}   
\begin{equation}\label{eq:statement}
c_1x^{\rho}\leq W(x)\leq c_2x^{\rho}\qquad \text{for all\, $1\leq x\leq d_r\cdot d_1^n$}
\end{equation} 
and all $n\geq0$ by induction on $n$. In particular, since $d_1>1$, it follows that $c_1x^{\rho}\leq W(x)\leq c_2x^{\rho}$ for all $x\geq1$. Note first that if $1\leq x\leq d_r$, then
\vspace{.05cm}
\begin{equation*}
\begin{split} 
c_1x^{\rho}=\frac{W(1)}{d_r^\rho}x^{\rho}\leq \frac{W(1)}{d_r^\rho}d_r^{\rho}=W(1)&\leq W(x)\\[2pt]
&\leq W(d_r)\leq W(d_r)\cdot x^\rho=c_2x^\rho,
\end{split} 
\end{equation*}
 since $W(x)$ and $x^{\rho}$ are an increasing functions. Hence, \eqref{eq:statement} is true for $n=0$. Now to the induction step: suppose that \eqref{eq:statement} holds and that $1\leq x\leq d_r\cdot d_1^{n+1}$. In particular, since we have established \eqref{eq:statement} for all $1\leq x\leq d_r$, we may assume without loss that $d_r\leq x\leq d_r\cdot d_1^{n+1}$. But then,\vspace{.05cm} 
\[1\leq \frac{d_r}{d_i}\leq\frac{x}{d_i}\leq d_r\cdot\frac{d_1}{d_i}\cdot d_1^{n}\leq d_r\cdot d_1^n \vspace{.1cm} \]
for all $1\leq i\leq r$. In particular, $1\leq x/d_i\leq d_r\cdot d_1^n$ and so we have that $c_1(x/d_i)^\rho\leq W(x/d_i)\leq c_2(x/d_i)^\rho$ by the induction hypothesis. Hence,  
\begin{equation*}
\begin{split}     
c_1x^{\rho}=c_1x^{\rho}\Big(\sum_{i=1}^r\frac{1}{d_i^\rho}\Big)=\sum_{i=1}^r c_1\Big(\frac{x}{d_i}\Big)^\rho&\leq\sum_{i=1}^rW\Big(\frac{x}{d_i}\Big)\\[3pt] 
&\leq \sum_{i=1}^r c_2\Big(\frac{x}{d_i}\Big)^\rho=c_2x^{\rho}\Big(\sum_{i=1}^r\frac{1}{d_i^\rho}\Big)=c_2x^{\rho}.
\end{split} 
\end{equation*}
However, \eqref{eq:functional} implies that $W(x)=\sum_{i=1}^r W(x/d_i)$, and we deduce that $c_1x^{\rho}\leq W(x)\leq c_2x^{\rho}$ for all $x\geq1$ as claimed.                          
\end{proof}
In particular, we immediately deduce the bounds in $\eqref{cor:functionount}$ for polarized sets of endomorphisms from Proposition \ref{thm:wordcount} and the fact that the degrees of endomorphisms are multiplicative with respect to composition.  
%\begin{cor}\label{cor:degreecount} Let $V$ be a projective variety, let $S:=\{\phi_1,\dots,\phi_r\}$ be a polarized set of endomorphisms on $V$, and say $\phi_i^*\mathcal{L}\cong\mathcal{L}^{\otimes d_i}$ for some $d_i>1$. Moreover, assume that $r\geq2$, that the semigroup $M_S$ is free, and that $\rho$ is the unique real number satisfying $d_1^{-\rho}+\dots+d_r^{-\rho}=1$.  
%Then there are positive constants $c_1$ and $c_2$ such that \vspace{.1cm} 
%\[c_1\, X^{\rho}\leq \#\{f\in M_S\,:\, \deg_\mathcal{L}(f)\leq X\}\leq c_2\, X^{\rho} \vspace{.05cm} \]  
%holds for all $X$ sufficiently large.
%\end{cor}
On the other hand, it is possible to obtain more exact asymptotics for the quantity in \eqref{wtatmostx} in the case of integer weights by analyzing the analytic properties of an associated generating function. With this in mind, we assume for the remainder of this section that the vector $\pmb{d}$ consists of positive \emph{integer} weights. Then for any $n\in\mathbb{Z}$, we let 
\begin{equation}\label{coeff}
a_{\pmb{d},n}:=\#\{\omega\in F_S\,:\, |\omega|_{\pmb{d}}=n\}
\end{equation} 
be the number of words of weight equal to $n$ and let
\begin{equation}\label{eq:dirichlet}  
\mathcal{D}_{\pmb{d}}(s):=\sum_{n\geq0} a_{\pmb{d},n}n^{-s} 
\end{equation} 
be the associated Dirichlet series generating function. In particular, we will improve upon Proposition \ref{thm:wordcount} in this case by applying known Tauberian theorems to \eqref{eq:dirichlet}. However, there is a catch: we must make two separate cases depending on whether the generating weights are all a powers of a single integer or not. With this in mind, we make the following definition.  
\begin{defin} A vector $\pmb{d}=(d_1,\dots, d_r)$ of positive integers is called \emph{cyclic} if there exists a single positive integer $d$ and a collection of positive integers $n_1,\ldots,n_r$ such that $d_j=d^{n_j}$ for all $j\geq1$. Moreover, if this is not the case, then we call $\pmb{d}$ \emph{acyclic}.  If $(d_1,\ldots ,d_r)$ is cyclic, then there is a unique positive integer $d$ such that $d_i=d^{a_i}$ for some positive integers $a_1,\ldots ,a_r$ with $\gcd(a_1,\ldots ,a_r)=1$, and we call this integer $d$ the \emph{base} of $\pmb{d}$.  
\end{defin} 
\begin{rem} It is an elementary fact of arithmetic that $\pmb{d}=(d_1,\dots, d_r)$ is cyclic if and only if the $d$'s are all pairwise multiplicatively dependent, and we used the second point of view in the introduction.        
\end{rem}  
With these notions in place, we now establish an exact asymptotic for the number of words of bounded multiplicative weight in free semigroups.   
%%%%%%%% Dirichlet Series Result 
\begin{thm}\label{thm:dirichlet}  Let $S=\{\alpha_1,\dots,\alpha_r\}$ be an alphabet, let $F_S$ be the free semigroup generated by $S$, and let $\pmb{d}=(d_1,\dots, d_r)$ be a vector of positive integers all at least $2$. Moreover, assume that $r\geq2$ and let $\rho_{\pmb{d}}>0$ be the unique real number satisfying $G_{\pmb{d}}(\rho_{\textbf{d}})=1$, where 
\[G_{\pmb{d}}(s)=d_1^{-s}+d_2^{-s}+\dots +d_r^{-s}. \vspace{.1cm} \] 
Then the following statements hold: \vspace{.15cm} 
\begin{enumerate} 
\item[\textup{(1)}] If $\pmb{d}$ is acyclic, then  
\[\mathcal{D}_{\pmb{d}}(s)=\frac{1}{1-G_{\pmb{d}}(s)} \vspace{.1cm}\]  
converges for all $\Re(s)\geq \rho_{\textbf{d}}$ except for a simple pole at $s=\rho_{\pmb{d}}$. Hence, the Wiener-Ikehara Tauberian theorem \cite[\S8.3]{Montgomery} implies that \vspace{.1cm}    
\[\#\{\omega\in F_S\,:\, |\omega|_{\pmb{d}}\leq X\}=\sum_{n\leq X}a_{\pmb{d},n}=\frac{-1}{\rho\, G'(\rho)}\,X^{\rho_{\pmb{d}}}+o(X^{\rho_{\pmb{d}}})\]
as $X$ tends to infinity. \vspace{.2cm} 
\item[\textup{(2)}] If $\pmb{d}$ is cyclic of base $d$, then there is a positive constant $C$ such that \vspace{.1cm} 
\[\#\{\omega\in F_S\,:\, |\omega|_{\pmb{d}}\leq X\}=C\cdot d^{\lfloor \log_d(X)\rfloor \cdot \rho}+o(d^{\lfloor \log_d(X)\rfloor \cdot \rho}) \vspace{.1cm}\] 
as $X$ tends to infinity.   
\end{enumerate}           
\end{thm}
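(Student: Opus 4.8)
The plan is to use the freeness of $F_S$ to get a closed form for $\mathcal{D}_{\pmb d}$, and then extract asymptotics for its coefficient sums analytically: via the Wiener--Ikehara theorem when $\pmb d$ is acyclic, and via elementary singularity analysis of a power series of rational type when $\pmb d$ is cyclic. First I would record the basic identity. Since $F_S$ is free, every nonempty word is uniquely $\omega=\alpha_i\omega'$ with $|\omega|_{\pmb d}=d_i|\omega'|_{\pmb d}$; summing $|\omega|_{\pmb d}^{-s}$ over all words and splitting off the first letter gives $\mathcal{D}_{\pmb d}(s)=1+G_{\pmb d}(s)\mathcal{D}_{\pmb d}(s)$ on the region of absolute convergence, hence $\mathcal{D}_{\pmb d}(s)=(1-G_{\pmb d}(s))^{-1}$ there. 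Because $a_{\pmb d,n}\ge 0$ and $\sum_{n\le X}a_{\pmb d,n}\asymp X^{\rho}$ by Proposition~\ref{thm:wordcount} (with $\rho:=\rho_{\pmb d}$), the abscissa of convergence of $\mathcal{D}_{\pmb d}$ is exactly $\rho$, and $(1-G_{\pmb d})^{-1}$ meromorphically continues $\mathcal{D}_{\pmb d}$ to $\mathbb{C}$, with poles precisely at the zeros of $1-G_{\pmb d}$. Write $G=G_{\pmb d}$.

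For part (1), I would locate the zeros of $1-G$ in the closed half-plane $\Re(s)\ge\rho$. For $\Re(s)>\rho$, strict monotonicity of $\sigma\mapsto\sum_i d_i^{-\sigma}$ gives $|G(s)|\le\sum_i d_i^{-\Re(s)}<\sum_i d_i^{-\rho}=1$, so there is no zero. On the line $\Re(s)=\rho$, if $G(\rho+it)=1$ then equating real parts with $\sum_i d_i^{-\rho}=1$ forces $\cos(t\log d_i)=1$, i.e.\ $t\log d_i\in 2\pi\mathbb{Z}$, for every $i$; if $t\ne0$ this makes all ratios $\log d_i/\log d_j$ rational, i.e.\ the $d_i$ pairwise multiplicatively dependent, i.e.\ $\pmb d$ cyclic --- contradicting the hypothesis. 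So $s=\rho$ is the unique zero of $1-G$ with $\Re(s)\ge\rho$, and it is simple because $(1-G)'(\rho)=-G'(\rho)=\sum_i(\log d_i)d_i^{-\rho}>0$, with residue $-1/G'(\rho)$ for $\mathcal{D}_{\pmb d}$ there. Hence $\mathcal{D}_{\pmb d}(s)-\frac{-1/G'(\rho)}{s-\rho}$ extends holomorphically across $\Re(s)=\rho$, and the Wiener--Ikehara theorem \cite[\S8.3]{Montgomery} gives $\sum_{n\le X}a_{\pmb d,n}\sim\frac{1}{\rho}\cdot\frac{-1}{G'(\rho)}X^{\rho}$, which is statement~(1).

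For part (2), write $d_i=d^{a_i}$ with $\gcd(a_1,\dots,a_r)=1$. Then $|\omega|_{\pmb d}=d^{a_{i_1}+\dots+a_{i_m}}$ is always a power of $d$, so $a_{\pmb d,n}=0$ unless $n=d^k$, and $b_k:=a_{\pmb d,d^k}$ counts words of additive weight $k$; the same free decomposition now gives $\sum_{k\ge0}b_kz^k=(1-P(z))^{-1}$ with $P(z)=z^{a_1}+\dots+z^{a_r}$. Since $P$ is strictly increasing on $[0,\infty)$ with $P(1)=r>1$, there is a unique $\beta\in(0,1)$ with $P(\beta)=1$, and comparison with $\sum_i d_i^{-\rho}=1$ gives $\beta=d^{-\rho}$. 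As $1-P$ is a polynomial its zeros are finite and isolated, all of modulus $\ge\beta$; repeating the real-part argument on $|z|=\beta$, now invoking $\gcd(a_i)=1$ (pick integers $u_i$ with $\sum_i u_ia_i=1$), shows $z=\beta$ is the only zero there, and it is simple since $P'(\beta)=\sum_i a_i\beta^{a_i-1}>0$. Therefore $(1-P(z))^{-1}-\frac{1/(\beta P'(\beta))}{1-z/\beta}$ is holomorphic on a disk of radius $\gamma>\beta$, so its coefficients are $O(\gamma^{-k})$, whence $b_k=\frac{1}{\beta P'(\beta)}\beta^{-k}+O(\gamma^{-k})$ with $\gamma^{-1}<\beta^{-1}$. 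Summing over $k\le\log_d X$ (equivalently $k\le\lfloor\log_d X\rfloor$) and using $\beta^{-1}=d^{\rho}$ yields $\sum_{n\le X}a_{\pmb d,n}=C\cdot d^{\lfloor\log_d X\rfloor\rho}+o\!\left(d^{\lfloor\log_d X\rfloor\rho}\right)$ with $C=\frac{1}{(1-\beta)\beta P'(\beta)}>0$, which is statement~(2).

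The generating-function bookkeeping and the coefficient extraction are routine; the one genuinely delicate step, in both cases, is the non-vanishing of $1-G$ on the line $\Re(s)=\rho$ (respectively of $1-P$ on the circle $|z|=\beta$) away from the real point --- the analogue of $\zeta(1+it)\ne0$ --- and this is exactly where the acyclicity, resp.\ the $\gcd=1$, hypothesis is indispensable. Indeed, in the cyclic case $1-P$ acquires additional zeros equally spaced around $|z|=\beta$, and these are precisely what replace the clean $X^{\rho}$ by the oscillating quantity $d^{\lfloor\log_d X\rfloor\rho}$; one should expect no further analytic surprises, only the need to keep track of the explicit constants.
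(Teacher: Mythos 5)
Your proposal is correct and follows essentially the same route as the paper: the closed form $\mathcal{D}_{\pmb d}=(1-G_{\pmb d})^{-1}$ from freeness, the triangle-inequality argument showing $s=\rho$ is the only (simple) pole on $\Re(s)=\rho$ in the acyclic case followed by Wiener--Ikehara, and in the cyclic case the reduction to the rational power series $(1-z^{a_1}-\dots-z^{a_r})^{-1}$ with the $\gcd=1$ argument isolating the dominant simple pole and partial fractions giving the asymptotic. The only cosmetic difference is that the paper encodes the cumulative count by multiplying by $(1-x)^{-1}$ before extracting coefficients, whereas you sum the coefficients $b_k$ at the end; these are the same computation.
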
 
%%%%%%%%%%%%%%
Before we can establish Theorem \ref{thm:dirichlet}, we need a few elementary facts, the first of which is a simple consequence of the strong triangle inequality. 
\begin{lem}\label{lem:fact1} Let $z_1,\dots ,z_r\in\mathbb{C}$ not all zero be such that 
\[z_1+\dots +z_n=|z_1|+\dots+|z_n|.\] 
Then $z_i=|z_i|$ for all $i$ (i.e., $\arg(z_i)=2k_i\pi$ for some $k_i\in\mathbb{Z}$).  
\end{lem}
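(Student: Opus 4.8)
The plan is to exploit the fact that the right-hand side $|z_1|+\dots+|z_n|$ is a nonnegative real number, so the hypothesis forces the sum $z_1+\dots+z_n$ to be real (indeed nonnegative real). First I would take real parts of both sides of the identity $z_1+\dots+z_n=|z_1|+\dots+|z_n|$, obtaining $\sum_{i}\Re(z_i)=\sum_i|z_i|$. Since $\Re(z_i)\le|z_i|$ holds termwise for every complex number, an equality of sums of termwise-dominated quantities can only occur if equality holds in each term; hence $\Re(z_i)=|z_i|$ for all $i$.

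Next I would invoke the elementary characterization of when a complex number equals its own real part's absolute... more precisely, $\Re(w)=|w|$ if and only if $w$ is a nonnegative real number, i.e. $w=|w|$. Applying this with $w=z_i$ gives $z_i=|z_i|$ for each $i$, which is exactly the claim; the parenthetical statement about $\arg(z_i)$ being a multiple of $2\pi$ (with the usual convention that $\arg(0)=0$, justified since not all $z_i$ vanish) is just a restatement. Alternatively, one can phrase the same argument through the equality case of the triangle inequality $|z_1+\dots+z_n|\le|z_1|+\dots+|z_n|$: equality here means all nonzero $z_i$ have the same argument $\theta$, and then $\sum z_i=e^{i\theta}\sum|z_i|$ being equal to the nonnegative real $\sum|z_i|>0$ forces $e^{i\theta}=1$.

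I do not expect any genuine obstacle here: the statement is a direct consequence of the termwise inequality $\Re(z)\le|z|$ together with the observation that the hypothesized sum is real. The only point requiring a word of care is handling the possibility that some $z_i$ vanish — this is why the hypothesis ``not all zero'' is included, ensuring the argument-phrasing makes sense — but the real-part argument sidesteps even this, since $\Re(0)=|0|=0$ causes no trouble. I would therefore present the two-line real-part proof as the main argument.
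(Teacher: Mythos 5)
Your proof is correct; the paper gives no proof at all for this lemma, simply citing it as "a simple consequence of the strong triangle inequality," and your real-part argument (equivalently, your alternative phrasing via the equality case of the triangle inequality) is exactly the standard justification the authors had in mind. The termwise inequality $\Re(z_i)\le|z_i|$ together with equality of the sums does force $\Re(z_i)=|z_i|$, hence $z_i=|z_i|$, for every $i$, and your handling of the vanishing terms is fine.
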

%\begin{proof} If $z_1+\dots +z_n=|z_1|+\dots+|z_n|$, then the triangle inequality becomes an equality:  
%\[\big|z_1+\dots +z_n\big|=\big||z_1|+\dots+|z_n|\big|=|z_1|+\dots+|z_n| \]
%In particular, it is well known that $\arg(z_i)=\arg(z_j)$ for all $i,j$. Hence, we may write $z_j=|z_j|e^{i\theta}$ for some $\theta$ for all $j$. But then   
%\[|z_1|+\dots+|z_n|=z_1+\dots +z_n=|z_1|e^{i\theta}+\dots+|z_r|e^{i\theta}=(|z_1|+\dots+|z_n|)e^{i\theta},  \]
%and dividing both sides by the non-zero quantity $|z_1|+\dots+|z_n|$ implies that $e^{i\theta}=1$. Hence, $\theta=2\pi k$ for some $k\in\mathbb{Z}$, and we deduce that $\arg(z_i)=2\pi k_i$ for some $k_i\in\mathbb{Z}$ as claimed.    
%\end{proof} 
Likewise, we need the following elementary fact from arithmetic. 
%({\bf I don't think we need to prove this, but probably there's a reference} In any case the proof is short: pick the smallest integer $d\ge 2$ such that $d_i=d^m$ for some positive integer $m$.  By assumption $\log_{d}(d_j)\in \mathbb{Q}_{>0}$ for all $j$, since $\log_d(d_i)$ is an integer. Thus for $j\in \{1,\ldots ,r\}$ there exist coprime positive integers $a,b$ such that $d^a=d_j^b$.  Since $a$ and $b$ are coprime $d$ is a $b$-th power and so my minimality of $d$ we must have $b=1$.) 
\begin{lem}\label{fact3} Let $d_1,\dots, d_r$ be positive integers all at least $2$. If the ratio $\log(d_i)/\log(d_j)\in\mathbb{Q}$ for all $i,j$, then the vector $\pmb{d}=(d_1,\dots,d_r)$ is cyclic. \vspace{.2cm} 
\end{lem}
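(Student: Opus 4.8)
The plan is to reduce everything to the one-dimensional nature of the $\mathbb{Q}$-span of the logarithms $\log d_1,\dots,\log d_r$. Since $\log(d_i)/\log(d_j)\in\mathbb{Q}$ for all $i,j$ by hypothesis, every $\log d_i$ is a rational multiple of $\log d_1$; hence the additive subgroup $L:=\mathbb{Z}\log d_1+\dots+\mathbb{Z}\log d_r\subseteq(\mathbb{R},+)$ is contained in the one-dimensional $\mathbb{Q}$-vector space $\mathbb{Q}\cdot\log d_1$. A finitely generated subgroup of $(\mathbb{Q},+)$ is infinite cyclic, so $L=\mathbb{Z}\mu$ for a unique real number $\mu>0$ (note $L\neq\{0\}$ since $d_1\ge 2$).

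First I would write $\log d_i=n_i\mu$ for each $i$; since $d_i\ge 2>1$ and $\mu>0$ we have $n_i\in\mathbb{Z}_{>0}$, and since $\mu\in L=\sum_i\mathbb{Z}\log d_i=\sum_i\mathbb{Z}\,n_i\mu$ there are integers $c_1,\dots,c_r$ with $c_1n_1+\dots+c_rn_r=1$ (so in particular $\gcd(n_1,\dots,n_r)=1$). Setting $\delta:=e^{\mu}>1$, this says exactly that $d_i=\delta^{n_i}$ for $i=1,\dots,r$, so it remains only to show that $\delta$ is an integer; then $\pmb d$ is cyclic, with base $\delta$ (or an appropriate root of $\delta$ if $\delta$ is itself a perfect power).

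To see that $\delta\in\mathbb{Z}$, use the B\'ezout relation from above:
\[
\delta=\delta^{\,c_1n_1+\dots+c_rn_r}=\prod_{i=1}^r\bigl(\delta^{n_i}\bigr)^{c_i}=\prod_{i=1}^r d_i^{\,c_i}\in\mathbb{Q}_{>0},
\]
so $\delta$ is a positive rational number with $\delta^{n_1}=d_1\in\mathbb{Z}$. Writing $\delta=a/b$ in lowest terms, $\delta^{n_1}=a^{n_1}/b^{n_1}$ is again in lowest terms, hence can be an integer only if $b=1$; thus $\delta\in\mathbb{Z}$ and $\delta\ge 2$, which completes the proof.

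The argument is elementary throughout; the only two facts used beyond bookkeeping are that a finitely generated subgroup of $\mathbb{Q}$ is cyclic and that a positive rational with an integral power is itself integral, and the passage from $\delta\in\mathbb{Q}_{>0}$ to $\delta\in\mathbb{Z}$ is the one point I would take care to state cleanly (there is essentially no real obstacle here). An alternative, equally short route avoids the subgroup language entirely: one proves by induction on $r$, using only the two-variable statement ``if $u^a=v^b$ with $u,v\in\mathbb{Z}_{\ge 2}$ and $a,b\in\mathbb{Z}_{>0}$ then $u$ and $v$ are common powers of a single integer'' — itself immediate from comparing $p$-adic valuations and a $\gcd$ argument — that $(d_1,\dots,d_r)$ has a common base.
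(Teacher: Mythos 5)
The paper states Lemma~\ref{fact3} without proof, labelling it ``an elementary fact from arithmetic,'' so there is no in-text argument to compare against; your job was simply to supply a correct proof, and you have. The structure is sound: passing to the additive subgroup $L=\sum_i\mathbb{Z}\log d_i\subseteq\mathbb{Q}\cdot\log d_1$, noting that any finitely generated subgroup of $\mathbb{Q}$ is cyclic, writing $d_i=\delta^{n_i}$ with $\delta=e^\mu$, using B\'ezout to show $\delta\in\mathbb{Q}_{>0}$, and then the lowest-terms argument to conclude $\delta\in\mathbb{Z}_{\ge 2}$. Every step is correct and the conclusion $d_i=\delta^{n_i}$ with $\delta,n_i$ positive integers is exactly the definition of ``cyclic'' in the paper.

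One small remark: the parenthetical ``(or an appropriate root of $\delta$ if $\delta$ is itself a perfect power)'' should be dropped. You have already established $\gcd(n_1,\dots,n_r)=1$ via B\'ezout, so $\delta$ \emph{is} the base of $\pmb d$ in the paper's sense: if $\delta=e^k$ with $k>1$, then writing $d_i=e^{kn_i}$ would give exponents with $\gcd=k>1$, so $e$ would not satisfy the base condition. This is cosmetic and does not affect the validity of the proof, since the lemma only requires exhibiting \emph{some} integer $d\ge 2$ with $d_i=d^{m_i}$, which you have done.
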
 
%\begin{proof} Write $\log(d_1)/\log(d_2)=m/n$ for some positive coprime integers $m$ and $n$ so that $d_1^n=d_2^m$. Now choose $a,b\in\mathbb{Z}$ such that $am+bn=1$, and let $t_1=d_1^a d_2^b$. Then 
%\[d_1=d_1^{am+bn}=d_1^{am}d_1^{bn}=d_1^{am}d_2^{bm}=(d_1^ad_2^b)^m=t_1^m,\]
%and likewise $d_2=t_1^n$. Therefore, both $d_1$ and $d_2$ are positive integer powers of $t_1$. Of course a priori, $t_1$ is simply a rational number (since either $a$ or $b$ must be negative). However, $t_1$ is a rational root of the monic polynomial $t^m-d_1\in\mathbb{Z}[t]$, and since $\mathbb{Z}$ is integrally closed in $\mathbb{Q}$, it must be the case that $t_1\in\mathbb{Z}$. Moreover, we may assume that $t_1$ is positive. 
%
%Now we may repeat the same argument for the pair $d_1$ and $d_3$ and find a positive integer $t_2$ such that $d_1$ and $d_3$ are both positive powers of $t_2$. But then $t_1$ and $t_2$ have a common power, $t_1^{m_1}=d_1=t_2^{m_2}$ for some $m_i\in\mathbb{Z}$. Moreover, we may assume that $\gcd(m_1,m_2)=1$, since if $d=\gcd(m_1,m_2)$, then $t_1^{m_1/d}/t_2^{m_2/d}$ is a positive $d$th root of unity and so must be $1$. Hence, after replacing $d_1$ and $d_2$ with $t_1$ and $t_2$ in the process above, we see that $t_1$ and $t_2$ are both powers of a common integer $t$. In particular, $d_1$, $d_2$ and $d_3$ are all powers of $t$. Continuing on in this way, we deduce that the multiset $\{d_1,\dots,d_r\}$ is cyclic.               
%\end{proof} 
\begin{proof}[(Proof of Theorem \ref{thm:dirichlet})] 
For all $m\geq0$, let $F_S(m)$ be the set of sequences of length $m$ in $F_S$. Then it follows from the definition of weight in \eqref{eq:weight} that 
\[G_{\pmb{d}}(s)^m=\sum_{\omega\in F_S(m)}\hspace{-.3cm}|\omega|_{\pmb{d}}^{-s}=\sum_{n\geq1}\hspace{-.1cm}\sum_{\substack{\;|\omega|_{\pmb{d}}=n\\[2pt] \;\;\omega\in F_S(m)\\}}\hspace{-.4cm}n^{-s}=\sum_{n\geq1}a_{\pmb{d},n,m}\; n^{-s},\]
where $a_{\pmb{d},n,m}=\#\{\omega\in F_S(m)\,:\,|\omega|_{\pmb{d}}=n\}$. In particular, 
%since $\displaystyle{a_{\pmb{d},n}=\sum_{m\geq0}a_{\pmb{d},n,m}}$
we see that 
\begin{equation*}
\begin{split} 
\frac{1}{1-G_{\pmb{d}}(s)}=\sum_{m\geq0} G_{\pmb{d}}(s)^m&=\sum_{m\geq0}\sum_{n\geq1}(a_{\pmb{d},n,m})\; n^{-s} \\[4pt] 
&=\sum_{n\geq1}\sum_{m\geq0}(a_{\pmb{d},n,m})\; n^{-s}=\sum_{n\geq1} a_{\pmb{d},n}n^{-s}=\mathcal{D}_{\pmb{d}}(s)
\end{split} 
\end{equation*}
holds formally. 
%here we use that $a_{\pmb{d},n,m}\geq0$, and so we may interchange sums as needed by Tonelli's theorem. 
As for the convergence of this expression, note first that since the $d_i$ are positive, $G_{\pmb{d}}(s)$ is a strictly decreasing continuous function on the real line. Moreover, $G_{\pmb{d}}(0)=r\geq2$ and $\displaystyle{\lim_{x\rightarrow\infty} G_{\pmb{d}}(x)=0}$, and so there is a unique, simple, real solution $x=\rho_{\pmb{d}}>0$ to the equation $G_{\pmb{d}}(x)=1$. On the other hand, for all $\Re(s)>\rho_{\pmb{d}}$ we have that \vspace{.05cm} 
\begin{equation*}
\begin{split}
|G_{\pmb{d}}(s)|&=|d_1^{-s}+d_2^{-s}+\dots +d_r^{-s}|\\[3pt]
&\leq |d_1^{-s}|+|d_2^{-s}|+\dots+|d_r^{-s}|\\[3pt]
&=d_1^{-\Re(s)}+d_2^{-\Re(s)}+\dots+d_r^{-\Re(s)}\\[3pt]
&<d_1^{-\rho_{\pmb{d}}}+d_2^{-\rho_{\pmb{d}}}+\dots+d_r^{-\rho_{\pmb{d}}}\\[3pt]
&=1.
\end{split}
\end{equation*}
In particular, the meromorphic function $\mathcal{D}_{\pmb{d}}(s)=(1-G_{\pmb{d}}(s))^{-1}$ converges in $\Re(s)>\rho_{\pmb{d}}$ and has a simple pole at $s=\rho_{\pmb{d}}$. However, this is in fact the only pole on the line $\Re(s)=\rho_{\pmb{d}}$. To see this, suppose that $G_{\pmb{d}}(s)=1$ for some $\Re(s)=\rho_{\pmb{d}}$ and write $s=\rho_{\pmb{d}}+ci$ for some  $c\in\mathbb{R}$. Then setting $z_i=d_i^{-s}$, we deduce from Lemma \ref{lem:fact1} that \[\log(d_i)c=2\pi k_i\] 
for some $k_i\in\mathbb{Z}$. In particular, if $c\neq0$, then $\log(d_i)/\log(d_j)\in\mathbb{Q}$ for all $i,j$, and Lemma \ref{fact3} implies that $\pmb{d}$ is cyclic. Therefore, if $\pmb{d}$ is acyclic, then $\mathcal{D}_{\pmb{d}}(s)=(1-G_{\pmb{d}}(s))^{-1}$ converges for all $\Re(s)\geq \rho_{\textbf{d}}$, except for the simple pole at $s=\rho_{\pmb{d}}$. This completes the proof of statement (1). 

On the other hand, the cyclic case (say with base $d$) can be handled with somewhat simpler analysis. Write $d_i=d^{a_i}$ for some positive integers $a_i$ with $\gcd(a_1,\ldots ,a_r)=1$.  Since $F_S$ is free on $S=\{\alpha_1,\dots,\alpha_r\}$, there are exactly 
${n_1+\cdots + n_r \choose n_1,\ldots ,n_r}$ words with $n_i$ occurrences of $\alpha_i$ for all $i=1,\ldots ,r$.  Moreover, the weight of each such element is exactly 
$d^{\sum a_i n_i}$.  In particular, the number of elements in $F_S$ of weight $\le d^L$ is the sum of the coefficients of $x^i$ for $i=0,\ldots ,L$ in the polynomial 
\[\sum_{n\ge 0} (x^{a_1}+\cdots + x^{a_r})^n.\] 
In particular, we see that for a positive integer $L$, the number of elements in $F_S$ of weight $\le d^L$ is the coefficient of $x^L$ in the formal power series
\[F(x):=(1-x)^{-1} \cdot (1-x^{a_1}-\cdots - x^{a_r})^{-1}.\]
Observe that $G(x):=1-x^{a_1}-\cdots - x^{a_r}$ is strictly decreasing on $(0,\infty)$ and since the function is $0$ when $x=0$ and is negative when $x=1$, it has a unique zero $\theta$ in $(0,1)$. Since the derivative of $G$ is negative on $(0,\infty)$ this is a simple zero.  Observe further that $x=\theta$ is the unique pole of $F(x)$ in the closed disc of radius $\theta$. To see this, observe that if $\beta$ is another pole in this disc, then we must have $G(\beta)=0$.  Then 
\[|G(\beta)| \ge 1-|\beta|^{a_1}-\cdots -|\beta|^{a_r} \ge 1-\theta^{a_1}-\cdots -\theta^{a_r}=0,\] 
with equality only if $|\beta|=\theta$.  Thus it suffices to consider $\beta$ of the form $\theta e^{2\pi i\mu}$.  
Then $G(\beta) = 1- \theta^{a_1} \exp(2\pi i a_1 \mu) -\cdots -\theta^{a_r} \exp(2\pi i a_r \mu)$ and since $\sum \theta^{a_j} = 1$, this can only be zero if
$\exp(2\pi i a_1 \mu) =\cdots = \exp(2\pi i a_r \mu)=1$.  Since the $a_j$ have gcd 1, we then see this forces $\mu$ to be an integer and so $\beta=\theta$ as required. Using partial fractions and the fact that $F(x)$ has a simple pole at $x=\theta$ and no other poles in the closed disc of radius $\theta$, we see that 
$$F(x) = C/(1-x/\theta) + F_0(x)$$ 
for some positive constant $C$ and some rational function $F_0(x)$ whose radius of convergence is strictly larger than $\theta$.  In particular, the coefficient of $x^L$ in $F(x)$ is asymptotic to $C\theta^L$, and so we have that 
\[\#\{\omega\in F_S\,:\, |\omega|_{\pmb{d}}\leq d^L\}\sim C\theta^L.\]
Since $\theta$ is the unique positive solution to $\sum \theta^{a_i} = 1$, we see that if 
$\rho$ is such that $d^{\rho}=\theta$, then $\rho$ is the unique positive solution
to $\sum 1/d_i^{\rho}=1$ and 
\[\#\{\omega\in F_S\,:\, |\omega|_{\pmb{d}}\leq d^L\}\sim C (d^L)^{\rho}.\]
Now for $X$ large, there is a unique $L$ such that $d^L \le X < d^{L+1}$. Hence, in general we have that
\[\#\{\omega\in F_S\,:\, |\omega|_{\pmb{d}}\leq X\} \sim C d^{\lfloor \log_d(X)\rfloor \cdot \rho},\]
a fact equivalent to statement (2).      
\end{proof}

In particular, we obtain an improved asymptotic for the number of functions of bounded degree in free semigroups generated by polarized sets. 
    
\begin{cor}\label{cor:acyclicdegrees} Let $V$ be a projective variety, let $S:=\{\phi_1,\dots,\phi_r\}$ be a polarized set of endomorphisms on $V$, and say $\phi_i^*\mathcal{L}\cong\mathcal{L}^{\otimes d_i}$ for some integers $d_i\geq2$. Moreover, assume that $r\geq2$, that $M_S$ is free, and that $\rho$ is the real number satisfying $d_1^{-\rho}+\dots+d_r^{-\rho}=1$.  
Then the following statements hold.\vspace{.1cm}  
\begin{enumerate} 
\item If $\pmb{d}=(d_1,\dots, d_r)$ is acyclic there is a positive constant $c$ such that 
\[\#\{f\in M_S\,:\, \deg_\mathcal{L}(f)\leq X\}\sim cX^{\rho}. \vspace{.1cm}\] 
\item If $\pmb{d}=(d_1,\dots, d_r)$ is cyclic of base $d$ there is a positive constant $C$ such that  
\[ \#\{f\in M_S\,:\, \deg_\mathcal{L}(f)\leq X\}\sim C\cdot d^{\lfloor \log_d(X)\rfloor \cdot \rho}\]     
\end{enumerate} 
\end{cor}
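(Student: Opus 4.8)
The plan is to deduce the corollary directly from Theorem \ref{thm:dirichlet} by identifying $M_S$ with the free semigroup $F_S$ on the alphabet $\{\alpha_1,\dots,\alpha_r\}$ in a way that matches $\deg_{\mathcal{L}}$ with the multiplicative weight $|\cdot|_{\pmb{d}}$.

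First I would set up the identification. Since $M_S$ is free on $S=\{\phi_1,\dots,\phi_r\}$, every $f\in M_S$ has a \emph{unique} expression $f=\phi_{i_1}\circ\dots\circ\phi_{i_m}$, so the map $\Psi\colon F_S\to M_S$ sending a word $\omega=\alpha_{i_1}\cdots\alpha_{i_m}$ to $\phi_{i_1}\circ\dots\circ\phi_{i_m}$ is a bijection (in fact a semigroup isomorphism under concatenation $\leftrightarrow$ composition). Comparing the definition $\deg_{\mathcal{L}}(\phi_{i_1}\circ\dots\circ\phi_{i_m})=\prod_{j=1}^m d_{i_j}$ with the weight $|\omega|_{\pmb{d}}=\prod_{j=1}^m d_{i_j}$ from \eqref{eq:weight}, we see that $\deg_{\mathcal{L}}(\Psi(\omega))=|\omega|_{\pmb{d}}$ for every word $\omega$. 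Hence $\Psi$ restricts to a bijection between $\{\omega\in F_S:|\omega|_{\pmb{d}}\le X\}$ and $\{f\in M_S:\deg_{\mathcal{L}}(f)\le X\}$, so these two sets have the same cardinality for every $X$.

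It then remains to invoke Theorem \ref{thm:dirichlet} with this vector $\pmb{d}=(d_1,\dots,d_r)$ of integers $\ge 2$, noting that $G_{\pmb{d}}(\rho)=\sum_i d_i^{-\rho}=1$ forces $\rho_{\pmb{d}}=\rho$. In the acyclic case, part (1) of Theorem \ref{thm:dirichlet} gives
\[
\#\{f\in M_S:\deg_{\mathcal{L}}(f)\le X\}=\#\{\omega\in F_S:|\omega|_{\pmb{d}}\le X\}=c\,X^{\rho}+o(X^{\rho}),
\]
where $c=-1/(\rho\,G_{\pmb{d}}'(\rho))>0$ since $G_{\pmb{d}}$ is strictly decreasing (so $G_{\pmb{d}}'(\rho)<0$); dividing by $c\,X^{\rho}$ yields statement (1). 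In the cyclic case of base $d$, part (2) of Theorem \ref{thm:dirichlet} gives
\[
\#\{f\in M_S:\deg_{\mathcal{L}}(f)\le X\}=C\cdot d^{\lfloor\log_d(X)\rfloor\cdot\rho}+o\!\left(d^{\lfloor\log_d(X)\rfloor\cdot\rho}\right)
\]
for a positive constant $C$; dividing by $C\,d^{\lfloor\log_d(X)\rfloor\cdot\rho}$ yields statement (2).

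There is essentially no obstacle: all of the analytic content lives in Theorem \ref{thm:dirichlet}, which is already proved above. The only point requiring care is the opening reduction — it is precisely the freeness of $M_S$ that makes $\Psi$ a \emph{bijection} rather than merely a surjection, and hence makes the $M_S$-count by degree \emph{equal} to (rather than merely bounded above by) the $F_S$-count by weight; without freeness, distinct words can collapse to the same map and one only obtains the upper bound recorded in \eqref{cor:functionount}.
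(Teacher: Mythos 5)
Your proof is correct and takes essentially the same route the paper intends: the paper itself states Corollary~\ref{cor:acyclicdegrees} without a written-out proof, treating it as a direct consequence of Theorem~\ref{thm:dirichlet} once one observes that freeness of $M_S$ makes the degree-count coincide with the weight-count on the free semigroup $F_S$. The one trivial caveat you might acknowledge is that $F_S$ (as used in the proof of Theorem~\ref{thm:dirichlet}) contains the empty word of weight $1$, whereas $M_S$ as a semigroup need not contain the identity, so your bijection $\Psi$ identifies the two counting sets up to a possible difference of a single element; since $X^{\rho}\to\infty$, this discrepancy is absorbed in the $o(X^{\rho})$ term and the asymptotic $\sim$ is unaffected.
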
 
\begin{rem} We note that our results on degrees in this section work over an arbitrary ground field. However, for the remainder of this paper (when we instead count by heights), we will restrict ourselves to number fields. 
\end{rem}
\subsection{From degrees to heights} 
Now that we have a handle on the growth of degrees in polarized  semigroups, we use this to study the growth of heights in orbits. To do this, we fix some notation. As before let $S=\{\phi_1,\dots,\phi_r\}$ be a polarized set of endomorphisms on a projective variety $V$ with $\phi_i^*\mathcal{L}\cong\mathcal{L}^{\otimes d_i}$ for some $d_i>1$ and some ample line bundle $\mathcal{L}$. Moreover, we may choose a height function $h_{\mathcal{L}}: V\rightarrow\mathbb{R}_{\geq0}$ associated to $\mathcal{L}$, well-defined up to $O(1)$. Then the basic tool that makes the conversion between degrees and heights possible is the following fact: for each $\phi_i$ there is a constant $C_i$ such that
\begin{equation}\label{hts:functoriality}
\Big|h_{\mathcal{L}}(\phi_i(P))-d_i\, h_{\mathcal{L}}(P)\Big|\leq C_i
\end{equation} 
holds for all $P\in V$; see, for instance, \cite[Theorem 7.29]{SilvDyn}. In particular, one may iterate the bound above and use a telescoping argument (due to Tate) to obtain a similar statement for semigroups; see \cite[Lemma 2.1]{Wade+Viv}.    
\begin{lem}\label{Tate} Let $S=\{\phi_1,\dots, \phi_r\}$ be a polarized set of endomorphisms on a variety $V$, let $d_S=\min\{d_i\}$, and let $C_S=\max\{C_{i}\}$ be as in \eqref{hts:functoriality}. Then \vspace{.05cm}
\[\bigg|\frac{h_{\mathcal{L}}(f(P))}{\deg_{\mathcal{L}}(f)}-h_{\mathcal{L}}(P)\bigg|\leq \frac{C_S}{d_S-1} \vspace{.05cm}\] 
holds for all $f\in M_S$ and all $P\in V$. 
\end{lem}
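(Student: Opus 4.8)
The plan is to prove Lemma~\ref{Tate} by a telescoping (Tate-style) argument applied to a single factorization $f = \phi_{i_1} \circ \cdots \circ \phi_{i_n}$ of an arbitrary $f \in M_S$. First I would set up notation: write $Q_0 = P$ and $Q_k = \phi_{i_k}(Q_{k-1})$ for $k = 1, \dots, n$, so that $Q_n = f(P)$, and let $D_k = d_{i_1} \cdots d_{i_k}$ denote the degree of the partial composition $\phi_{i_1} \circ \cdots \circ \phi_{i_k}$, with $D_0 = 1$ and $D_n = \deg_{\mathcal{L}}(f)$. The key observation is that \eqref{hts:functoriality} applied to $\phi_{i_k}$ at the point $Q_{k-1}$ gives $|h_{\mathcal{L}}(Q_k) - d_{i_k} h_{\mathcal{L}}(Q_{k-1})| \leq C_{i_k} \leq C_S$.

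Next I would convert this into a bound on consecutive terms of the sequence $h_{\mathcal{L}}(Q_k)/D_k$. Dividing the displayed inequality by $D_k = d_{i_k} D_{k-1}$ yields
\[
\left| \frac{h_{\mathcal{L}}(Q_k)}{D_k} - \frac{h_{\mathcal{L}}(Q_{k-1})}{D_{k-1}} \right| \leq \frac{C_S}{D_k}.
\]
Then I would telescope: since $h_{\mathcal{L}}(f(P))/\deg_{\mathcal{L}}(f) - h_{\mathcal{L}}(P) = \sum_{k=1}^n \big( h_{\mathcal{L}}(Q_k)/D_k - h_{\mathcal{L}}(Q_{k-1})/D_{k-1} \big)$, the triangle inequality gives
\[
\left| \frac{h_{\mathcal{L}}(f(P))}{\deg_{\mathcal{L}}(f)} - h_{\mathcal{L}}(P) \right| \leq \sum_{k=1}^n \frac{C_S}{D_k}.
\]
Finally I would bound the tail: since each $d_{i_j} \geq d_S \geq 2$, we have $D_k \geq d_S^k$, so $\sum_{k=1}^n C_S/D_k \leq C_S \sum_{k=1}^\infty d_S^{-k} = C_S/(d_S - 1)$, which is the claimed bound.

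I do not anticipate a serious obstacle here — the argument is the standard Tate telescoping trick and every ingredient is already available. The one point requiring a word of care is that when $M_S$ is not free, an element $f$ may have several factorizations into generators; but the statement only asserts the existence of \emph{some} bound valid for all $f$, and the bound obtained is uniform (independent of the chosen factorization and of $n$), so picking any one factorization suffices. One should also note that $\deg_{\mathcal{L}}(f)$ is well-defined independent of the factorization by the multiplicativity of degrees recalled at the start of Section~\ref{sec:degrees}, so the left-hand side is unambiguous.
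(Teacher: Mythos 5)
Your proof is correct and is precisely the Tate telescoping argument that the paper itself invokes by citing \cite[Lemma 2.1]{Wade+Viv} rather than writing it out. One cosmetic slip: having written $f=\phi_{i_1}\circ\cdots\circ\phi_{i_n}$, setting $Q_k=\phi_{i_k}(Q_{k-1})$ actually produces $Q_n=(\phi_{i_n}\circ\cdots\circ\phi_{i_1})(P)$, so you should either relabel the factorization as $f=\phi_{i_n}\circ\cdots\circ\phi_{i_1}$ or reverse the recursion; this does not affect the estimate since $D_n$ and the telescoping sum are unchanged under relabeling.
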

In particular, we may deduce the following result, which says first that the set of preperiodic points of $M_S$ is a set of bounded height and second that if $P$ is not a preperiodic point, then $h(f(P))\rightarrow\infty$ as $|f|\rightarrow\infty$.    
\begin{lem}\label{lem:hts} Let $S=\{ \phi_1,\ldots ,\phi_r\}$ be a finite set of endomorphisms of a variety $V$.  Then there is a positive constant $C$ such that: \vspace{.1cm} 
\begin{enumerate}
\item if $P$ is a preperiodic point for the semigroup $M_S$ then $h_{\mathcal{L}}(P)\le C$; \vspace{.15cm} 
\item if $h_{\mathcal{L}}(P) > C$ then $h_{\mathcal{L}}(f(P)) > h_{\mathcal{L}}(P)$ for all $f\in M_S$. \vspace{.15cm} 
\end{enumerate}
In particular, if $P$ is not preperiodic for $M_S$ and $B>0$, then there is an $N:=N(V,S,P,B)$ such that $h(f(P))>B$ for all $f\in M_S$ with $|f|>N$. 
\end{lem}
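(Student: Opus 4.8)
The plan is to reduce the whole lemma to the telescoping estimate of Lemma~\ref{Tate}. Since $S$ is polarized with respect to $\mathcal{L}$, every $f\in M_S$ satisfies $\deg_{\mathcal{L}}(f)=\prod_j d_{i_j}\ge d_S\ge 2$ (the $d_i$ being integers $>1$, so no generator is the identity), and Lemma~\ref{Tate} supplies a constant $\kappa:=C_S/(d_S-1)$ with $\bigl|h_{\mathcal{L}}(f(P))/\deg_{\mathcal{L}}(f)-h_{\mathcal{L}}(P)\bigr|\le\kappa$ for all $f\in M_S$ and all $P\in V$. I would prove the lemma with $C:=2\kappa$.

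Parts (1) and (2) are then quick bookkeeping. For (2): if $h_{\mathcal{L}}(P)>C=2\kappa$, then $h_{\mathcal{L}}(P)-\kappa>\kappa>0$, so for every $f\in M_S$ Lemma~\ref{Tate} gives $h_{\mathcal{L}}(f(P))\ge\deg_{\mathcal{L}}(f)\bigl(h_{\mathcal{L}}(P)-\kappa\bigr)\ge 2\bigl(h_{\mathcal{L}}(P)-\kappa\bigr)>h_{\mathcal{L}}(P)$. For (1): if $P$ is preperiodic, choose $f,g\in M_S$ with $g$ not the identity and $g\circ f(P)=f(P)$, and put $Q:=f(P)$, so that $g^n(Q)=Q$ while $\deg_{\mathcal{L}}(g^n)=\deg_{\mathcal{L}}(g)^n\to\infty$. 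Applying Lemma~\ref{Tate} to $g^n$ at $Q$ and letting $n\to\infty$ forces $h_{\mathcal{L}}(Q)\le\kappa$; applying it once more to $f$ at $P$ gives $h_{\mathcal{L}}(P)\le\kappa+h_{\mathcal{L}}(Q)/\deg_{\mathcal{L}}(f)\le 2\kappa=C$.

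For the final assertion I would argue by contradiction. If no such $N$ existed, there would be $f_1,f_2,\dots\in M_S$ with $h_{\mathcal{L}}(f_j(P))\le B$ and word-lengths tending to infinity; fixing once and for all a representation $f_j=\phi_{i^{(j)}_1}\circ\cdots\circ\phi_{i^{(j)}_{m_j}}$ of each makes the argument insensitive to whether $M_S$ is free. Now peel off the innermost letters one at a time by the pigeonhole principle: some $\psi_1\in S$ is the rightmost factor of infinitely many $f_j$, among those some $\psi_2\in S$ is the next factor of infinitely many, and so on, producing $\psi_1,\psi_2,\dots\in S$, a nested sequence of infinite index sets, and orbit points $P_k:=\psi_k\circ\cdots\circ\psi_1(P)$ such that for each $k$ and each surviving $j$ with $m_j>k$ one has $f_j(P)=f^{(k)}_j(P_k)$ for a composition $f^{(k)}_j\in M_S$. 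Lemma~\ref{Tate} then bounds $h_{\mathcal{L}}(P_k)\le h_{\mathcal{L}}(f_j(P))/\deg_{\mathcal{L}}(f^{(k)}_j)+\kappa\le B+\kappa$, uniformly in $k$. Since $P$ and the $\phi_i$ are defined over a fixed number field and $h_{\mathcal{L}}$ is an ample height, Northcott's theorem makes $\{Q'\in V:h_{\mathcal{L}}(Q')\le B+\kappa\}$ finite, so $P_k=P_l$ for some $k<l$. Then $g:=\psi_l\circ\cdots\circ\psi_{k+1}\in M_S$ has $\deg_{\mathcal{L}}(g)\ge 2$, hence is not the identity, and fixes $P_k$; taking $f:=\psi_k\circ\cdots\circ\psi_1$ when $k\ge 1$, or $f:=g$ when $k=0$ (so $g\circ f(P)=g(P)=P=f(P)$), exhibits $P$ as preperiodic, a contradiction. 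Hence $N$ exists, and since every quantity above depends only on $V$, $S$, $P$, $B$, one may take $N=N(V,S,P,B)$.

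Parts (1)--(2) are routine constant-chasing; the real work is the last assertion, and the step to watch is organizing the peeling so that the leftover compositions $f^{(k)}_j$ retain unbounded length — this is precisely what pins the auxiliary points $P_k$ at bounded height, so that Northcott forces a genuine periodic return (hence preperiodicity). If one wants $N$ explicit in principle, the same input shows that the tree of finite partial orbits of $P$ (built by applying one generator at a time) that stay at height $\le C$ is finite — an infinite such tree would, by König's lemma, have an infinite branch and hence a repeated point — and its depth $N_0$, together with the estimate $h_{\mathcal{L}}(f(P))\ge d_S^{\,|f|-N_0}\kappa$ valid once the partial orbit of $f$ escapes height $C$, yields a bound for $N$.
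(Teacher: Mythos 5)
Your proof is correct, and parts (1)--(2) run on exactly the same fuel as the paper's: Lemma~\ref{Tate} plus elementary constant-chasing (your part (1), iterating $g^n$ and letting $n\to\infty$ to get $h_{\mathcal{L}}(Q)\le\kappa$, is a slightly slicker variant of the paper's single application of the telescoping bound; the resulting constants differ harmlessly). Where you genuinely diverge is the final assertion. The paper argues directly: it sets $N$ equal to the Northcott count $\#\{Q\in V(K): h_{\mathcal{L}}(Q)\le B\}$, takes a \emph{single} word $f=\theta_m\circ\cdots\circ\theta_1$ with $m>N$, and looks at its partial compositions $f_i(P)$ --- either some $f_i(P)$ has height $>B$ (and then part (2) propagates this to $f(P)$), or all $m>N$ of them have height $\le B$ and the pigeonhole principle forces $f_i(P)=f_j(P)$, contradicting non-preperiodicity. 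You instead negate the conclusion, extract an infinite family of offending words, and run a K\"onig's-lemma-style peeling to build an infinite branch of orbit points $P_k$ of uniformly bounded height, then invoke Northcott once on that branch. Both arguments are sound and use the same two inputs (the telescoping height bound and Northcott), but the paper's version is shorter, avoids the compactness detour, and hands you an explicit value of $N$ for free --- essentially the finiteness-of-the-bounded-height-tree observation you relegate to your closing remark. One small thing to note: your reduction of preperiodicity to a repeated point $P_k=P_l$ on the branch is exactly the step the paper performs with $g_{ij}=\theta_i\circ\cdots\circ\theta_{j+1}$, so the two proofs reconverge at the end.
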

\begin{proof}
Notice that if $P$ is a preperiodic point for $M_S$ and $Q:=f(P)=g\circ f(P)$ then 
\[\bigg|\frac{h_{\mathcal{L}}(g(Q))}{\deg_{\mathcal{L}}(g)}-h_{\mathcal{L}}(Q)\bigg|\leq \frac{C_S}{d_S-1} \vspace{.05cm}\] and since $g(Q)=Q$ and $\deg_{\mathcal{L}}(g)\ge 2$ we then see that 
$$h_{\mathcal{L}}(Q)\le 2C_S\cdot (d_S-1)^{-1}.$$  Similarly, we have
\[\bigg|\frac{h_{\mathcal{L}}(f(P))}{\deg_{\mathcal{L}}(f)}-h_{\mathcal{L}}(P)\bigg|\leq \frac{C_S}{d_S-1} \vspace{.05cm}\]
and since $f(P)=Q$ we use the inequality on the height of $Q$ to get that $h_{\mathcal{L}}(P)\le 2\frac{C_S}{d_S-1}$.
Also, observe that if $h_{\mathcal{L}}(P)\ge 2C_S$ then 
if $f\in M_S$ then Lemma \ref{Tate} gives that 
$h_{\mathcal{L}}(f(P))/\deg_{\mathcal{L}}(f)> h_{\mathcal{L}}(P)/2$ and so 
$h_{\mathcal{L}}(f(P))>h_{\mathcal{L}}(P)$ for all $f\in M_S$.  
In particular, setting $C=2C_S$, we deduce both claims (1) and (2).

For the final claim, assume that $P$ is not preperiodic for $M_S$ and that $B>0$. Note that without loss of generality, we may assume that $B\geq C$. Now let $K$ be a number field over which $P$ and every map in $S$ is defined. Then $f(P)\subseteq V(K)$ for all $f\in M_S$ and   
\[N(B):=\#\{Q\in V(K)\,:\, h_\mathcal{L}(P)\leq B\}\]
is finite by Northcott's theorem. Now assume that $|f|=m>N$ and write $f=\theta_m\circ\theta_{m-1}\circ\dots\circ\theta_1$ for some $\theta_i\in S$. Likewise, for $1\leq i\leq m$ let $f_i:=\theta_i\circ\theta_{i-1}\circ\dots\circ\theta_1$. Note first that if $h(f_i(P))>B\geq C$ for some $i$, then $h(f(P))>B$ also by property (2). Therefore, we may assume that $h(f_i(P))\leq B$ for all $i\leq m$. On the other hand $m>N$, so that $f_i(P)=f_j(P)$ for some $i>j$ by the pigeon-hole principle and the definition of $N$. But then 
\[g_{ij}\circ f_j(P)=f_i(P)=f_j(P)\]
for $g_{ij}=\theta_i\circ\dots\circ \theta_{j+1}$, a contradiction of our assumption that $P$ is not preperiodic. Therefore, $h(f(P))>B$ as claimed.         
\end{proof} 
Lastly, before proving Theorem \ref{thm:functioncount}, we need the following result, which provides a key new ingredient for improving the estimates in \cite{Wade:MathZ}.   
\begin{lem}\label{lem:beta}
Suppose that $M_S$ is free and that $P$ is a not a preperiodic point of $V$.  Then there is a positive constant $\beta=\beta(S,\mathcal{L},P)$ such that 
$$\lim_{n\rightarrow\infty}\sum_{g\in S_n} h_{\mathcal{L}}(g(P)))^{-\rho}=\beta.$$
\end{lem}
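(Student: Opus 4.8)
The idea is to show that the partial sums $T_n := \sum_{g\in S_n} h_{\mathcal{L}}(g(P))^{-\rho}$ form a Cauchy sequence — in fact, I expect them to converge geometrically. The engine is Lemma \ref{Tate}: for every $g\in M_S$ and every point $Q\in V$ we have $h_{\mathcal{L}}(g(Q)) = \deg_{\mathcal{L}}(g)\cdot h_{\mathcal{L}}(Q) + E(g,Q)$ with $|E(g,Q)|\le C_S/(d_S-1) =: \kappa$. First I would record that, since $M_S$ is free, every $g\in S_{n+1}$ factors uniquely as $g = g'\circ\phi_i$ with $g'\in S_n$ and $\phi_i\in S$ (or the other way — I would pick the decomposition $g = \phi_i \circ g'$ peeling off the \emph{last} map applied, so that $g(P) = \phi_i(g'(P))$ with $g'\in S_n$; whichever is cleaner given that $\deg$ and $h$ are multiplicative). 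This gives a recursion relating the terms at level $n+1$ to those at level $n$.

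**Main computation.** Fix $g'\in S_n$ and set $Q = g'(P)$. By Lemma \ref{lem:hts}, since $P$ is not preperiodic there is $N$ so that $h_{\mathcal{L}}(g'(P))$ is as large as we like once $n > N$; so assume $n$ is large enough that $h_{\mathcal{L}}(Q)\ge 2\kappa$, say. Then for each $i$,
\[
h_{\mathcal{L}}(\phi_i(Q))^{-\rho} = \big(d_i h_{\mathcal{L}}(Q) + E_i\big)^{-\rho} = d_i^{-\rho} h_{\mathcal{L}}(Q)^{-\rho}\Big(1 + \frac{E_i}{d_i h_{\mathcal{L}}(Q)}\Big)^{-\rho},
\]
and since $|E_i/(d_i h_{\mathcal{L}}(Q))| \le \kappa/(d_S h_{\mathcal{L}}(Q)) \le 1/(2 d_S) < 1/2$, the bracketed factor is $1 + O\big(h_{\mathcal{L}}(Q)^{-1}\big)$ with an absolute implied constant (from the Taylor/mean-value estimate for $t\mapsto (1+t)^{-\rho}$ on $|t|\le 1/2$). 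Summing over $i=1,\dots,r$ and using $\sum_i d_i^{-\rho} = 1$ gives
\[
\sum_{i=1}^r h_{\mathcal{L}}(\phi_i(Q))^{-\rho} = h_{\mathcal{L}}(Q)^{-\rho}\big(1 + O(h_{\mathcal{L}}(Q)^{-1})\big).
\]
Now summing this over all $g'\in S_n$ (with $Q = g'(P)$) yields $T_{n+1} = T_n + \sum_{g'\in S_n} O\big(h_{\mathcal{L}}(g'(P))^{-\rho-1}\big)$, so $|T_{n+1} - T_n| \ll \sum_{g'\in S_n} h_{\mathcal{L}}(g'(P))^{-\rho-1}$.

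**Closing the argument.** It remains to show the error series $\sum_n \sum_{g'\in S_n} h_{\mathcal{L}}(g'(P))^{-\rho-1}$ converges. Here I would again use Lemma \ref{Tate}: for $g'\in S_n$ with $n$ large, $h_{\mathcal{L}}(g'(P)) \ge \tfrac12 \deg_{\mathcal{L}}(g') h_{\mathcal{L}}(P) \gg \deg_{\mathcal{L}}(g') \gg d_S^{\,n}$ (using $\deg_{\mathcal{L}}(g')\ge d_S^n$), so $h_{\mathcal{L}}(g'(P))^{-\rho-1} \ll \deg_{\mathcal{L}}(g')^{-\rho-1} = \deg_{\mathcal{L}}(g')^{-\rho}\cdot \deg_{\mathcal{L}}(g')^{-1} \ll d_S^{-n}\,\deg_{\mathcal{L}}(g')^{-\rho}$. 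Since $\sum_{g'\in S_n}\deg_{\mathcal{L}}(g')^{-\rho} = \big(\sum_i d_i^{-\rho}\big)^n = 1$, we get $\sum_{g'\in S_n} h_{\mathcal{L}}(g'(P))^{-\rho-1} \ll d_S^{-n}$, which is summable over $n$. Hence $(T_n)$ is Cauchy and converges to some $\beta \ge 0$. Finally $\beta > 0$: dropping all error terms in the other direction, the same recursion gives $T_{n+1} \ge T_n(1 - O(h_{\mathcal{L}}(Q)^{-1}))$ uniformly, so $T_n$ is bounded below by a positive constant for all large $n$ (an infinite product $\prod(1 - c d_S^{-n})$ converges to a positive limit), whence $\beta > 0$. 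The constant $\beta$ manifestly depends only on $S$, $\mathcal{L}$, $P$ (through the normalization of $h_{\mathcal{L}}$ and the initial terms).

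**Expected main obstacle.** The routine part is the $(1+t)^{-\rho}$ expansion; the one place requiring care is making the "once $n$ is large" threshold uniform — i.e., that $h_{\mathcal{L}}(g'(P)) \ge 2\kappa$ holds for \emph{all} $g'\in S_n$ simultaneously once $n > N$. This is exactly the content of the final assertion of Lemma \ref{lem:hts} (with $B = 2\kappa$), so it is in hand; one just has to invoke it before starting the telescoping, and treat the finitely many initial levels $n\le N$ as a harmless finite contribution that does not affect convergence or positivity of the limit (for positivity one uses instead that $T_n$ is eventually increasing up to the geometrically small errors, starting from any level past $N$).
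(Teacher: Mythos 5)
Your proof is correct and follows essentially the same route as the paper's: decompose $S_{n+1}=\bigsqcup_i \phi_i\circ S_n$ using freeness, compare $h_{\mathcal{L}}(\phi_i(Q))^{-\rho}$ with $(d_i h_{\mathcal{L}}(Q))^{-\rho}$ (you via a Taylor expansion of $(1+t)^{-\rho}$, the paper via the mean value theorem), use $\sum_i d_i^{-\rho}=1$ to telescope, bound the error geometrically via $\deg_{\mathcal{L}}(g')\ge d_S^{\,n}$, and reduce the general non-preperiodic case to large-height base points via Lemma \ref{lem:hts}. The only quibble is your opening statement of Lemma \ref{Tate} (the error bound is $|E(g,Q)|\le \deg_{\mathcal{L}}(g)\,\kappa$, not $\kappa$), but since you only apply the single-step estimate \eqref{hts:functoriality} in the main computation and the correct form of Lemma \ref{Tate} in the closing, nothing breaks.
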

\begin{proof}
For $n$ let $\beta_n=\beta_n(P)=\sum_{g\in S_n} h_{\mathcal{L}}(g(P))^{-\rho}$.  Since $M_S$ is free, $S_{n+1}$ is the disjoint union of $\phi_1\circ S_n,\ldots , \phi_r\circ S_n$.  
Then
$$\left| h_{\mathcal{L}}(\phi_i(f(P))) - d_i h_{\mathcal{L}}(f(P))\right| \le C_S$$ and so
$$d_i h_{\mathcal{L}}(f(P))-C_S\le  h_{\mathcal{L}}(\phi_i(f(P))) \le d_i h_{\mathcal{L}}(f(P))+C_S.$$
In particular, the mean value theorem gives the inequality
$$\left| h_{\mathcal{L}}(\phi_i(f(P)))^{-\rho} - (d_i  h_{\mathcal{L}}(f(P))^{-\rho}\right| \le C_S \rho\, \big| d_i\cdot h_{\mathcal{L}}(f(P))-C_S \big|^{-\rho-1}.$$
%Since ${\rm deg}_{\mathcal{L}}(\phi_i\circ f) = d_i \cdot {\rm deg}_{\mathcal{L}}(f)$, we see that
%$$\left| h_{\mathcal{L}}(\phi_i(f(P)))/{\rm deg}_{\mathcal{L}}(\phi\circ f) -  h_{\mathcal{L}}(f(P))/{\rm deg}_{\mathcal{L}}(f)\right| \le C/{\rm deg}_{\mathcal{L}}(\phi\circ f).$$
Assume first that $h(P)>C:=2C_S$. Then since $\sum_{i=1}^r 1/d_i^{\rho}=1$, we have 
\begin{eqnarray*} |\beta_{n+1}-\beta_n| &= & \left|\sum_{i=1}^r \sum_{f\in S_n}  h_{\mathcal{L}}(\phi_i\circ f(P))^{-\rho} - (d_ih_{\mathcal{L}}(f(P)))^{-\rho} \right| \\[4pt]
&\le&  \sum_{i=1}^r \sum_{f\in S_n}
C_S \rho\, \big|d_i\cdot h_{\mathcal{L}}(f(P))-C_S\big|^{-\rho-1}\\[4pt]
&\le&  \sum_{i=1}^r \sum_{f\in S_n}
 \frac{C_S\rho}{(C_S( d_i {\rm deg}_{\mathcal{L}}(f)-1))^{\rho+1}}\\[4pt]
 &\le &  C_S^{-\rho} 2^{\rho+1}\rho \sum_{i=1}^r \sum_{f\in S_n} 1/(d_i {\rm deg}_{\mathcal{L}}(f))^{\rho+1}\\[4pt]
 &=& C_S^{-\rho} 2^{\rho+1}\rho \sum_{j_1+\cdots +j_r=n+1} {n+1\choose j_1,\ldots ,j_r} d_1^{-j_1(\rho+1)} \cdots d_r^{-j_r(\rho+1)} \\[4pt] 
 &=& C_S^{-\rho} 2^{\rho+1}\rho\cdot (1/d_1^{\rho+1} + \cdots + 1/d_r^{\rho+1})^{n+1}.\\ 
\end{eqnarray*}
In particular, since 
$C':=\sum 1/d_i^{\rho+1} < 1$ and since $\sum_j (C')^j$ converges, we see that $\{\beta_n(P)\}$ is a Cauchy sequence converging to some $\beta(P)$ as claimed. This completes the proof when $h(P)\geq2C_S$. For the general case, assume only that $P$ is not preperiodic for $S$. Then Lemma \ref{lem:hts} implies that there exists $N$ such that $h(f(P))>2C_S$ for all $|f|\geq N$. On the other hand, for $n>N$ we have that 
\[\sum_{g\in S_n} h_{\mathcal{L}}(g(P))^{-\rho}=\sum_{f\in S_{N}}\sum_{F\in S_{n-N}}h_{\mathcal{L}}(F(f(P))^{-\rho}=\sum_{f\in S_N}\beta_{n-N}(f(P))\] 
since $M_S$ is free. Moreover, for each $f\in M_S$ the sequence $\{\beta_{n-N}(f(P))\}$ converges by our argument above. Therefore, $\{\beta_n(P)\}$, a finite sum of convergent sequences, must converge also.     
\end{proof}

In particular, we may use Lemma \ref{Tate}, Lemma \ref{lem:beta}, and the degree growth estimates in Section \ref{sec:degrees} to bound the number of functions in a polarized semigroup yielding a bounded height relation; see Theorem \ref{thm:functioncount} from the Introduction.  
\begin{proof}[(Proof of Theorem \ref{thm:functioncount})] Assume that $r=\#S\geq2$, let $\rho$ be the unique positive number satisfying ${d_1}^{-\rho}+\dots+{d_r}^{-\rho}=1$, and let  $b_S=C_S/(d_S-1)$; here $d_S$ and $C_S$ are the constants in Lemma \ref{Tate}. We assume first that $P\in V$ satisfies $h_{\mathcal{L}}(P)>b_S$ and then show how the general non-preperiodic case follows from this one. In particular, if $h_{\mathcal{L}}(P)>b_S$, then Lemma \ref{Tate} implies  
\begin{equation*}\label{subsets}
\begin{split} 
\hspace{-.1cm}\bigg\{f\in M_S:\, \deg_{\mathcal{L}}(f)\leq \frac{X}{h_{\mathcal{L}}(P)+b_s}\bigg\}&\subseteq \bigg\{f\in M_S:\, h_{\mathcal{L}}(f(P))\leq X\bigg\}\\[5pt] 
&\subseteq \bigg\{f\in M_S:\, \deg_{\mathcal{L}}(f)\leq \frac{X}{h_{\mathcal{L}}(P)-b_s}\bigg\}\vspace{.1cm}
\end{split} 
\end{equation*}
In particular, Proposition \ref{thm:wordcount} implies that there is a constant $c$ such that 
\begin{equation*}
\begin{split} 
\#\{f\in M_S\,:\, h_{\mathcal{L}}(f(P))\leq X\}&\leq\#\bigg\{f\in M_S\,:\, \deg_{\mathcal{L}}(f)\leq \frac{X}{h_{\mathcal{L}}(P)-b_s}\bigg\}\\[5pt] 
&\leq \#\bigg\{\omega\in F(S)\,:\, |\omega|_{\pmb{d}} \leq \frac{X}{h_{\mathcal{L}}(P)-b_s}\bigg\}\\[5pt]
&\leq c\Big(\frac{X}{h_{\mathcal{L}}(P)-b_s}\Big)^\rho \\[5pt] 
\end{split} 
\end{equation*}
holds for all $X$ sufficiently large; here $\pmb{d}=(d_1,\dots,d_r)$ is the vector of degrees of functions in $S$. Hence, 
we obtain statement (1) as claimed. On the other hand, if $M_S$ contains a free semigroup with finite basis $S'\subset M_S$, let $\rho'$ be the unique positive number satisfying $\sum_{g\in S'}1/\deg(g)^{\rho'}=1$. Then Proposition \ref{thm:wordcount} and Lemma \ref{Tate} imply that there is a positive constant $c'$ such that 
\begin{equation*} \label{middlecase}
\begin{split} 
\#\{f\in M_S\,:\, h_{\mathcal{L}}(f(P))\leq X\}&\geq\#\bigg\{f\in M_S\,:\, \deg_{\mathcal{L}}(f)\leq \frac{X}{h_{\mathcal{L}}(P)+b_s}\bigg\}\\[5pt] 
&\geq \#\bigg\{f\in M_{S'}\,:\, \deg_{\mathcal{L}}(f)\leq \frac{X}{h_{\mathcal{L}}(P)+b_s}\bigg\}\\[5pt]
&\geq c'\,\Big(\frac{X}{h_{\mathcal{L}}(P)+b_s}\Big)^{\rho'}\\[5pt] 
\end{split} 
\end{equation*} 
holds for all $X$ sufficiently large. In particular, we deduce statement (2) as claimed; moreover, clearly $\rho'\leq \rho$ since $X^{\rho'}\ll X^{\rho}$ by combining the bounds in statements (1) and (2). 

Next, suppose that $M_S$ is itself free. Then taking $S=S'$ in the proof of statement (2) above, we see that $\#\{f\in M_S\,:\, h_{\mathcal{L}}(f(P))\leq X\}\asymp X^\rho$ for all sufficiently large $X$ as claimed in statement (3).  

Finally, suppose that $M_S$ is free and that the vector of degrees $\pmb{d}=(d_1,\dots,d_r)$ is acyclic. Moreover, let $\epsilon>0$ and let 
$$\beta:=\lim_{n\to\infty} \sum_{g\in S_n} h_{\mathcal{L}}(g(P))^{-\rho}$$ 
be the constant from Lemma \ref{lem:beta}. Then Lemma \ref{lem:hts} implies that $h_{\mathcal{L}}(g(P))\to\infty$ as $|g|\to\infty$ and so there is a natural number $n$ such that
\[\left| \sum_{g\in S_n} h_{\mathcal{L}}(g(P))^{-\rho}-\beta\right| < \epsilon\] 
and
\[h_{\mathcal{L}}(g(P))-b_s > (1-\epsilon) h_{\mathcal{L}}(g(P)) \vspace{.1cm} \] 
for all $g\in S_n$. Then statement (1) of Corollary \ref{cor:acyclicdegrees} implies that for all $g\in S_n$ and all large $X$ we have that  \vspace{.1cm} 
\begin{align*}
&~ \#\{f\in M_S\,:\, h_{\mathcal{L}}(f\circ g(P))\leq X\}\\[6pt] 
&\leq  \#\bigg\{f\in M_S\,:\, \deg_{\mathcal{L}}(f)\leq \frac{X}{h_{\mathcal{L}}(g(P))-b_s}\bigg\}\\[6pt] 
&\leq  c(1+\epsilon)\Big(\frac{X}{h_{\mathcal{L}}(g(P))-b_S}\Big)^{\rho} \\[6pt]  
&\le c(1+\epsilon)(1-\epsilon)^{-\rho} X^{\rho}/h_{\mathcal{L}}(g(P))^{\rho}. 
\end{align*}
On the other hand, for large $X$ we have that 
\[\#\{f\in M_S\,:\, h_{\mathcal{L}}(f(P))\leq X\} = \#S_{<n} + \sum_{g\in S_n} \#\{f\in M_S\,:\, h_{\mathcal{L}}(f\circ g(P))\leq X\}\] 
and so we get that  
$$\#\{f\in M_S\,:\, h_{\mathcal{L}}(f(P))\leq X\} \le \#S_{<n} + c(1+\epsilon)(1-\epsilon)^{\rho} X^{\rho}(\beta+\epsilon)$$ 
for $X$ sufficiently large. Moreover, by a completely analogous argument we get a lower bound of
$$\#\{f\in M_S\,:\, h_{\mathcal{L}}(f(P))\leq X\} \ge \#S_{<n} + c(1-\epsilon)(1+\epsilon)^{\rho} X^{\rho}(\beta-\epsilon)$$
for all large $X$. Then, since $n$ is fixed and $\epsilon$ is arbitrarily small, we see that 
$$\#\{f\in M_S\,:\, h_{\mathcal{L}}(f(P))\leq X\}= c\beta X^{\rho}(1+o(1)).$$
 as claimed in statement (4). 

Finally, assume now only that $P$ is a non-preperiodic for $M_S$. Then Lemma \ref{lem:hts} implies that there exists $N$ such that $h(g(P))>b_S$ for all $|g|\geq N$. In particular, for all large $X$ we have that 
\[
\scalemath{0.95}{
\#\{f\in M_S\,:\, h_{\mathcal{L}}(f(P))\leq X\}=\#S_{\leq N}+\sum_{g\in S_N}\#\{f\in M_S\,:\, h_{\mathcal{L}}(f(g(P)))\leq X \}.}
\]
However, by replacing $P$ with $g(P)$ in our arguments above, we see that each of the finitely many terms $\#\{f\in M_S\,:\, h_{\mathcal{L}}(f(g(P)))\leq X \}$ for $g\in S_N$ exhibit growth according to statements (1) through (4) of Theorem \ref{thm:functioncount}, assuming the corresponding hypotheses. In particular, the total count $\#\{f\in M_S\,:\, h_{\mathcal{L}}(f(P))\leq X\}$ exhibits the desired growth also.      
\end{proof} 
Lastly, we note that our upper bound on the number of functions $f\in M_S$ such that $h_\mathcal{L}(f(P))\leq X$ for non-preperiodic basepoints $P$ leads to a general upper bound for the number of points of bounded height in orbits; see Theorem \ref{thm:universalupperbd} from the Introduction. The last step is the following lemma: 
\begin{lem}\label{lem:general} Let $S=\{\phi_1,\dots,\phi_r\}$ be a polarizable set of endomorphisms on a variety $V$, let $P\in V$ be such that $\Orb_S(P)$ is infinite, and let $B>0$. Then there is a finite subset $F\subseteq\Orb_S(P)$ and distinct points $Q_1,\dots, Q_n$ in $\Orb_S(P)$ such that the following statements hold: \vspace{.1cm}  
\begin{enumerate} 
\item Each $Q_i$ satisfies $h_{\mathcal{L}}(Q_i)>B$. \vspace{.15cm}  
\item $\Orb_S(P)=F\cup\Orb_S(Q_1)\cup\dots\cup\Orb_S(Q_{n})$.\vspace{.1cm}  
\end{enumerate}
In particular, we may assume that $Q_1,\dots,Q_n$ are not preperiodic for $S$.  
\end{lem}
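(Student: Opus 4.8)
The plan is to peel off the finitely many low-height points of $\Orb_S(P)$ and to recover everything else as the forward orbits of finitely many points sitting just above height $B$. First I would reduce to the case $B\ge C$, where $C$ is the constant of Lemma~\ref{lem:hts}: it suffices to prove the lemma with $B$ replaced by $B':=\max(B,C)$, since the resulting $F$ and $Q_i$ work verbatim for $B$ as well --- we have $h_{\mathcal L}(Q_i)>B'\ge B$, and the decomposition of $\Orb_S(P)$ does not mention $B$. With $B\ge C$ in force, two consequences of Lemma~\ref{lem:hts} are available: every point of height $>B$ is non-preperiodic (part (1)), and once a forward orbit exceeds height $B$ it stays above $B$ (part (2)).

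Next I dispose of the case $h_{\mathcal L}(P)>B$: then $\Orb_S(P)$ consists entirely of points of height $>B$ and $P$ itself is non-preperiodic, so one takes $F=\emptyset$, $n=1$, $Q_1=P$. Assume henceforth $h_{\mathcal L}(P)\le B$, and fix a number field $K$ over which $P$ and every $\phi_i$ is defined, so that $\Orb_S(P)\subseteq V(K)$. By Northcott's theorem the set $D:=\{Q\in\Orb_S(P):h_{\mathcal L}(Q)\le B\}$ is finite, and $P\in D$; this is the only use of finiteness (in particular the infinitude hypothesis on $\Orb_S(P)$ is not really needed). Let $\{Q_1,\dots,Q_n\}$ be the finite set of all points of the form $\phi_i(Q)$ with $Q\in D$, $1\le i\le r$ and $h_{\mathcal L}(\phi_i(Q))>B$ (with repetitions discarded), and put $F:=D\cup\{Q_1,\dots,Q_n\}$, again finite. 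Each $Q_j$ lies in $\Orb_S(P)$ because $\Orb_S(P)$ is forward-invariant under $S$, each $Q_j$ has height $>B$, and each $Q_j$ is non-preperiodic by the reduction to $B\ge C$ --- which already gives the closing assertion of the lemma.

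It remains to check $\Orb_S(P)=F\cup\Orb_S(Q_1)\cup\dots\cup\Orb_S(Q_n)$. The inclusion ``$\supseteq$'' is immediate from $D\subseteq\Orb_S(P)$, from $Q_j\in\Orb_S(P)$, and from forward-invariance. For ``$\subseteq$'', let $R\in\Orb_S(P)$; if $h_{\mathcal L}(R)\le B$ then $R\in D\subseteq F$. Otherwise write $R=\phi_{i_m}\circ\dots\circ\phi_{i_1}(P)$, set $P_0=P$ and $P_\ell=\phi_{i_\ell}(P_{\ell-1})$ for $1\le\ell\le m$, so that $P_m=R$. Since $h_{\mathcal L}(P_0)\le B<h_{\mathcal L}(P_m)$, there is a least index $\ell^*\ge 1$ with $h_{\mathcal L}(P_{\ell^*})>B$; then $P_{\ell^*-1}\in D$ while $P_{\ell^*}=\phi_{i_{\ell^*}}(P_{\ell^*-1})$ has height $>B$, so $P_{\ell^*}$ equals some $Q_j$. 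Applying $\phi_{i_m}\circ\dots\circ\phi_{i_{\ell^*+1}}$ to $P_{\ell^*}$ returns $R$, whence $R\in\Orb_S(Q_j)$, and $R=P_{\ell^*}=Q_j\in F$ in the boundary case $\ell^*=m$. This establishes the decomposition.

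I do not expect any real obstacle: the argument is bookkeeping, the substantive inputs being Northcott's theorem (for finiteness of $D$) and Lemma~\ref{lem:hts} (for the case $h_{\mathcal L}(P)>B$ and for the non-preperiodicity of the $Q_j$). The only thing to handle with care is the orbit convention. I have written the above taking $\Orb_S(Q)$ to contain its basepoint $Q$ --- consistent with the statement's reference to ``the identity'' --- and I keep every $Q_j$ inside $F$ so that the decomposition survives unchanged if one instead defines $\Orb_S(Q)=\{f(Q):f\in M_S\}$; with that convention one also uses the seed set $D\cup\{P\}$ and replaces $Q_1=P$ in the easy case by the finitely many points $\phi_i(P)$, each of height $>h_{\mathcal L}(P)>B$.
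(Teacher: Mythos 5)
Your proof is correct and follows the same overall strategy as the paper's: reduce to $B\ge C$, dispose of the case $h_{\mathcal L}(P)>B$, and for a general point of the orbit locate the first index at which the trajectory's height exceeds $B$, so that the orbit decomposes into the finite low-height set together with the forward orbits of the finitely many possible ``first-exit'' points. The one place you diverge is in how you certify that the set of first-exit points is finite. The paper defines $T:=\{Q\in\Orb_S(P): B<h_{\mathcal L}(Q)\le 2dB\}$ with $d=\max d_i$, applies Northcott to this band, and then uses the functoriality inequality $h_{\mathcal L}(\phi(Q))\le \deg(\phi)(h_{\mathcal L}(Q)+C_S)$ to show the first point exceeding height $B$ must land in $T$. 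You instead take the (Northcott-finite) set $D$ of orbit points of height $\le B$ and declare the $Q_j$ to be those one-step images $\phi_i(Q)$, $Q\in D$, of height $>B$; finiteness is then automatic from $\#D<\infty$ and $r<\infty$, and no height inequality is needed. Your version is marginally more elementary at that step; the paper's has the cosmetic advantage that the $Q_i$ come with the explicit height ceiling $2dB$, though nothing downstream uses it. Your remarks on the orbit convention (whether $P\in\Orb_S(P)$) and on the dispensability of the infinitude hypothesis are both accurate and handled correctly.
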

\begin{proof} Let $C$ be the constant in Lemma \ref{lem:hts}. As a reminder $C=2C_S$, where $C_S$ is the constant from Lemma \ref{Tate}. Moreover, let $d=\max\{d_i\}$ where $d_i$ is as in \eqref{hts:functoriality}. Now let $P\in V$ have infinite orbit and $B>0$. Without loss, we may assume $B\geq C$. Note that the desired conclusion follows easily if $h_\mathcal{L}(P)>B$ by taking $n=1$, $P=Q_1$, and $F=\varnothing$. Therefore, we may assume that $h_{\mathcal{L}}(P)\leq B$. Now define the set 
\[T:=\big\{Q\in\Orb_S(P)\,:\, B< h_{\mathcal{L}}(Q)\leq2dB\big\}.\] 
In particular, $T=Q_1,\dots, Q_n$ is a finite (possibly empty) set by Northcott's theorem (seen by working over a fixed number field $K$ over which $P$ and every map in $S$ is defined). Likewise, $F:=\{Q\in\Orb_S(P)\,:\, h_{\mathcal{L}}(Q)\leq B\}$ is a finite set, and we will show that $F$ and the points $Q_i$ have the desired properties. To see this, suppose that $Q=\theta_m\circ\dots\circ\theta_1(P)$ for some $\theta_i\in S$ satisfies $h_{\mathcal{L}}(Q)>B$ (such $Q$ must exist since $\Orb_S(P)$ is infinite), and define
\[m_0=\min\big\{s\,:\, h_{\mathcal{L}}(\theta_s\circ\dots\circ\theta_1(P))>B\big\}.\vspace{.1cm} \]
Note that $m_0$ exists since $m$ is in the defining set above. Moreover, $m_0\geq1$ since $h_{\mathcal{L}}(P)\leq B$ by assumption. We claim that $Q':=\theta_{m_0}\circ\dots\circ\theta_1(P)\in T$ (in particular, $T$ is non-empty): if not, then necessarily $h_{\mathcal{L}}(Q')>2d B$. However, we then see that
\begin{equation*}
\begin{split} 
2dB&<h_{\mathcal{L}}(Q')=h_{\mathcal{L}}(\theta_{m_0}(\theta_{m_0-1}\circ\dots\circ\theta_1(P)))\\[3pt] 
&\leq\deg(\theta_{m_0})(h_{\mathcal{L}}(\theta_{m_0-1}\circ\dots\circ\theta_1(P))+C_S)\leq2dB,
\end{split} 
\end{equation*}
a contradiction; here we use Lemma \ref{Tate} and the minimality of $m_0$. In particular, we have shown that for any $Q\in\Orb_S(P)$ of height bigger than $B$ there is a point $Q_i\in T$ such that $Q\in\Orb_S(Q_i)$. Hence, we deduce that $\Orb_S(P)=F\cup\Orb_S(Q_1)\cup\dots\cup\Orb_S(Q_{n})$ as claimed.      
\end{proof}
In particular, since every point in $\Orb_S(P)$ of height at most $X$ is determined by at least one function $f\in M_S$ satisfying $h_{\mathcal{L}}(f(P))\leq X$ and (outside of a finite set) all orbits are covered by non-preperiodic orbits, we see that 
\[\#\{Q\in\Orb_S(P)\,:\, h_{\mathcal{L}}(Q)\leq X\}\ll X^{\rho}\]
holds for all $P\in V$ by combining Theorem \ref{thm:functioncount} part (1) with Lemma \ref{lem:general}.  
\section{Orbit counts in $\mathbb{P}^1$}\label{sec:P^1}
\subsection{Generic bounds} Now that we have some handle on the growth of functions with various boundedness properties in a semigroup, we turn to the asymptotic growth of points in orbits of bounded height. To make this transition, we need some control on the functions that agree at a particular value. This is possible when $V=\mathbb{P}^1$ precisely because we have some control on the set of rational (or integral) points on curves.  
To state this refinement in dimension $1$, it will be useful to have the following general definition.
\begin{defn}\label{def:cancel}
     Let $S$ be a polarizable set of rational maps on a variety $V$  and let $P\in V$. Then we say that $M_S$ has the \emph{cancellation property} if there is a positive constant $B=B(P)$ such that if $f(Q_1)=g(Q_2)$ for some $f,g\in M_S $ and some $Q_1, Q_2 \in \Orb_S(P)$ with $h_{\mathcal{L}}(Q_i)> B$, then $f=\phi\circ f_0$ and $g=\phi\circ g_0$ for some $\phi\in S$ and $f_0,g_0\in M_S$. Moreover, $f_0(Q_1)=g_0(Q_2)$.   
\end{defn}
%\begin{defin}\label{def:cancel}
% Let $S=\{\phi_1,\dots,\phi_r\}$ be a polarizable set of rational maps on a variety $V$ generating a free semigroup. We say that the semigroup $M_S$ has the \emph{cancellation property} if 
%for each non-preperiodic point $P\in V$ there is a natural number $N=N(P)$ with the property that if $f,g\in M_S $ are such that $f(P)=g(P)$ and $\max(|f|,|g|)> N$ then there is some $i$ such that $f=\phi_i\circ f_0$, $g=\phi_i\circ g_0$ with $f_0(P)=g_0(P)$.   
%\end{defin}
In particular, if $M_S$ has the cancellation property above, then an asymptotic on the number of functions yielding a bounded height relation leads to an asymptotic for the number of points of bounded height in orbits. 
\begin{prop}\label{prop:sum}
Let $S=\{\phi_1,\dots,\phi_r\}$ be a polarizable set of rational maps on a variety $V$ generating a free semigroup with the cancellation property and suppose that $P\in V$ is such that $\Orb_S(P)$ is infinite. Then there exists a $B>0$ and points $Q_1,\ldots ,Q_{t}\in\Orb_S(P)$ such that $h(Q_i)>B$ and \vspace{-.2cm} 
\begin{equation}\label{eq:sum}
\scalemath{.93}{
\#\{Q\in {\rm Orb}_S(P) \colon h_{\mathcal{L}}(Q) \le X\}\, \sim
\sum_{i=1}^{t} \#\{f\in M_S\,:\, h_{\mathcal{L}}(f(Q_i)) \le X\}.}
\end{equation} 
In particular, we may choose non-preperiodic points $Q_1,\ldots ,Q_{t}\in\Orb_S(P)$ such that \eqref{eq:sum} holds.  
\end{prop}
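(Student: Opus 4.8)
The plan is to reduce the statement about counting points in $\Orb_S(P)$ to the count of functions in $M_S$ hitting $Q_i$ with bounded height, using the cancellation property to control over-counting, and then to invoke Lemma \ref{lem:general} to replace the $Q_i$ by non-preperiodic points. First I would apply Lemma \ref{lem:general} (with the bound $B$ provided by the cancellation property in Definition \ref{def:cancel}, enlarged if necessary) to obtain a finite set $F\subseteq\Orb_S(P)$ and points $Q_1,\dots,Q_t\in\Orb_S(P)$ with $h_{\mathcal{L}}(Q_i)>B$ and $\Orb_S(P)=F\cup\Orb_S(Q_1)\cup\dots\cup\Orb_S(Q_t)$; the last sentence of Lemma \ref{lem:general} lets me further assume the $Q_i$ are not preperiodic. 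Since $F$ is finite, it contributes only $O(1)$ to the count and can be ignored in the asymptotic, so it suffices to control $\#\{Q\in\bigcup_i\Orb_S(Q_i)\colon h_{\mathcal{L}}(Q)\le X\}$.

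Next I would set up the two inequalities that pinch \eqref{eq:sum}. For the upper bound, every $Q\in\Orb_S(Q_i)$ with $h_{\mathcal{L}}(Q)\le X$ is $f(Q_i)$ for some $f\in M_S$, so
\[
\#\{Q\in\Orb_S(P)\colon h_{\mathcal{L}}(Q)\le X\}\le \#F+\sum_{i=1}^{t}\#\{f\in M_S\colon h_{\mathcal{L}}(f(Q_i))\le X\},
\]
which gives the ``$\ll$'' direction with the right leading term once we know $\#F=o(X^\rho)$ (indeed $\#F=O(1)$). The real content is the matching lower bound: I must show that the natural surjection $\bigsqcup_i\{f\in M_S\colon h_{\mathcal{L}}(f(Q_i))\le X\}\twoheadrightarrow\{Q\in\bigcup_i\Orb_S(Q_i)\colon h_{\mathcal{L}}(Q)\le X\}$ is ``almost injective'' — i.e., the number of collisions is $o(X^\rho)$. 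A collision means $f(Q_i)=g(Q_j)$ for pairs $(i,f)\ne(j,g)$. Here the cancellation property is the key tool: if $f(Q_i)=g(Q_j)$ with both heights large, then $f=\phi\circ f_0$, $g=\phi\circ g_0$ with $\phi\in S$ and $f_0(Q_i)=g_0(Q_j)$, and one can strip off common leading factors inductively. Since $M_S$ is free, peeling off $\phi$ strictly decreases $|f|$ and $|g|$, so after finitely many steps we reach $f'=\phi'\circ f_0'$, $g'=\phi''\circ g_0'$ with $f_0'(Q_i)=g_0'(Q_j)$ but no further common leading factor can be stripped — forcing (by the cancellation property, whose conclusion would otherwise apply) either $i\ne j$ with $f',g'$ of bounded length, or one of $f',g'$ trivial. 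The upshot is that colliding pairs $(i,f),(j,g)$ with $f\ne g$ must have $f,g$ agreeing on a long common prefix and differing only in a boundedly-long tail attached to some fixed short ``exceptional'' list of relations among $Q_1,\dots,Q_t$; the number of such $f$ with $h_{\mathcal{L}}(f(Q_i))\le X$ is then bounded by a constant times $\#\{f'\in M_S\colon h_{\mathcal{L}}(f'(Q_\ell))\le X/c\}$ for the shorter $f'$, which — crucially — after accounting for the fixed finite prefix saves a multiplicative factor and hence is $o(X^\rho)$, or even better is itself governed by Theorem \ref{thm:functioncount}(3) applied with a smaller leading constant. Summing the collision bound over the finitely many $(i,j)$ pairs gives a total collision count of $o(X^\rho)$.

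The main obstacle I expect is making the collision-counting argument genuinely rigorous: one has to argue that the set of ``fully cancelled'' pairs $(f_0',i),(g_0',j)$ with $f_0'(Q_i)=g_0'(Q_j)$ and no common leading factor is finite (or at least contributes negligibly), and that pulling back along this finite set of relations multiplies the function-count only by a bounded factor while the function-count for the reduced words grows like a constant times $X^\rho$ with the \emph{same} $\rho$ but — because each genuine collision consumes at least one generator — the reduced count is strictly smaller order, or the relations themselves are so sparse that their total contribution is $o(X^\rho)$. A clean way to package this: let $N_i(X)=\#\{f\in M_S\colon h_{\mathcal{L}}(f(Q_i))\le X\}$ and let $R(X)$ be the number of colliding pairs; by the cancellation argument $R(X)\ll \sum_\ell N_\ell(X/c)$ for some $c>1$ coming from the stripped generator, and since $N_\ell(X)\asymp X^\rho$ by Theorem \ref{thm:functioncount}(3) we get $R(X)\ll c^{-\rho}\sum_\ell N_\ell(X)$; iterating the cancellation to strip $k$ generators improves this to $R(X)\ll c^{-k\rho}\sum_\ell N_\ell(X)$ for every $k$, whence $R(X)=o(X^\rho)$. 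Combining the upper bound, the lower bound $\#\{Q\le X\}\ge \frac{1}{t}\bigl(\sum_i N_i(X)-R(X)\bigr)$ adjusted properly — more precisely $\#\{Q\colon h_{\mathcal{L}}(Q)\le X\}\ge \sum_i N_i(X) - R(X) - \#F$ via inclusion-exclusion on the fibers — yields \eqref{eq:sum}. Finally, the closing sentence ``we may choose non-preperiodic $Q_i$'' is immediate from the last clause of Lemma \ref{lem:general}, so no extra work is needed there.
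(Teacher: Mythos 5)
Your skeleton matches the paper's: invoke Lemma \ref{lem:general} with $B$ at least the cancellation constant, discard the finite set $F$, and use the cancellation property to compare $\#\{Q\in\Orb_S(Q_i):h_{\mathcal{L}}(Q)\le X\}$ with $\#\{f\in M_S:h_{\mathcal{L}}(f(Q_i))\le X\}$. The upper bound is fine. The gap is in your lower bound, where you try to show the collision count $R(X)$ is $o(X^\rho)$. First, the mechanism you describe is not what the cancellation property gives: since the hypotheses of Definition \ref{def:cancel} (heights of $Q_i,Q_j$ exceeding $B$) persist at every stage of the stripping, a common leading generator can \emph{always} be peeled off until one of the two words becomes empty; there is no ``bounded-length, no further cancellation'' terminal configuration, so the ``exceptional list of relations'' you lean on is not defined. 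Second, the quantitative bound $R(X)\ll c^{-k\rho}\sum_\ell N_\ell(X)$ is not justified: a colliding pair is recovered from its stripped version only after re-attaching a length-$k$ common prefix, of which there are $r^k$ choices, and since $r\,d_{\min}^{-\rho}\ge 1$ (because $\sum d_i^{-\rho}=1$) the factor $c^{-k\rho}$ does not beat $r^k$, so iterating $k\to\infty$ does not yield $o(X^\rho)$ by this route.

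The fix is that no collision counting is needed at all: the stripping process terminates with one word empty, and the two possible outcomes are exactly (i) both words empty simultaneously, forcing $Q_i=Q_j$ and (by freeness) $f=g$, or (ii) $f_0(Q_i)=Q_j$ for some nontrivial $f_0$, which when $i=j$ contradicts non-preperiodicity of $Q_i$ (Lemma \ref{lem:hts}, since $h_{\mathcal{L}}(Q_i)>B$) and when $i\ne j$ gives $\Orb_S(Q_j)\subseteq\Orb_S(Q_i)$, so one simply discards $Q_j$ from the list. After this pruning the orbits $\Orb_S(Q_1),\dots,\Orb_S(Q_t)$ are pairwise disjoint and each evaluation map $f\mapsto f(Q_i)$ is injective, so $\#\{Q\in\Orb_S(Q_i):h_{\mathcal{L}}(Q)\le X\}=\#\{f\in M_S:h_{\mathcal{L}}(f(Q_i))\le X\}$ holds \emph{exactly}, and the only asymptotic loss in \eqref{eq:sum} is the $O(1)$ contribution of $F$. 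You reached the doorstep of this argument (the ``one of $f',g'$ trivial'' case) but did not draw the conclusion from it; instead the proposal substitutes an unproved density estimate for what is in fact an exact bijection.
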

\begin{proof}
    Let $B_0$ be the constant from Lemma \ref{lem:hts}, let $B_1$ be the constant from Definition \ref{def:cancel}, and let $B = \max(B_1, B_0)$. Then Lemma \ref{lem:general} implies that there exists a finite set $F$ and points $Q_1, \dots, Q_n$ satisfying $h_{\mathcal{L}}(Q_i) > B$ such that 
    \begin{equation}
        \Orb_S(P) = F \cup \bigcup^n_{i = 1} \Orb_S(Q_i).
    \end{equation}
    We will show that the union over the $\Orb_S(Q_i)$ may be written as a disjoint union. To see this, suppose that $f(Q_i) = g(Q_j)$ for some $f, g \in M_S$. If $|f|=|g|$, then repeated application of the cancellation property implies that $Q_i=Q_j$. On the other hand if $|f| \neq |g|$, say without loss $|f| > |g|$, then repeated application of the cancellation property implies that $f_0(Q_i) = Q_j$ for some $f_0 \in M_S$. Hence, $\Orb_S(Q_j) \subseteq \Orb_S(Q_i)$ in this case. Thus, after reordering if necessary, we may choose $t\leq n$ such that \begin{equation}\label{eq: orbdisjointdecomp}
        \Orb_S(P) = F \cup \bigsqcup^t_{i = 1} \Orb_S(Q_i).
    \end{equation}
    On the other hand, for each $Q_i$ a similar argument implies that the evaluation map $f\rightarrow f(Q_i)$ is injective: if $f(Q_i) = g(Q_i)$ for some $|f| = |g|$, then repeated application of the cancellation property implies that $f = g$. Likewise, if $|f|>|g|$ then repeated application of the cancellation property implies that $f_0(Q_i) = Q_i$ for some non-identity map $f_0\in M_S.$ But then $Q_i$ is preperiodic, contradicting Lemma \ref{lem:hts} and the fact that $h_{\mathcal{L}}(Q_i) > B$. Therefore, for each $Q_i$ the evaluation map $f\rightarrow f(Q_i)$ is injective, and thus
    \begin{equation}\label{eq:nonporbcunt}
        \# \{Q \in \Orb_S(Q_i): h_{\mathcal{L}}(Q) \leq X\} = \#\{f \in M_S : h_{\mathcal{L}}(f(Q_i)) \leq X\}. 
    \end{equation}
    Hence, combining (\ref{eq: orbdisjointdecomp}) and (\ref{eq:nonporbcunt}), we obtain (\ref{eq:sum}) as claimed. 
\end{proof} 
%\begin{proof} Let $B>0$ and let $N=N(P,B)$ be the maximum of the $N$ associated to $P$ in definition \ref{def:cancel} and the $N$ associated to $B$ in Lemma \ref{lem:hts}. Moreover, let $f_1,\ldots ,f_{t}\in S_N$ be such that 
%$f_1(P),\ldots ,f_t(P)$ are pairwise distinct and if $g\in S_N$ then $g(P)=f_i(P)$ for some $i$.
%Now if $f\circ f_i(P) = g\circ f_j(P)$ for some $f,g\in M_S$ and some $i,j$ then by repeatedly using the defining property of $N$ in definition \ref{def:cancel}, we must have $f=g$ and $f_i(P)=f_j(P)$, and so $i=j$. Since all but $O(1)$ elements of $M_S$ are of length $\ge N$ we then see that the number of distinct elements of the form $f(P)$ with $f\in M_S$ of height at most $X$ is asymptotic to 
%\[\sum_{i=1}^{t(P,B)} \#\{Q\in {\rm Orb}_S(Q_i) \colon h_{\mathcal{L}}(Q) \le X\},\] 
%where $Q_i=f_i(P)$ for $i=1,\ldots,t(P,B)$. In particular, by taking $B$ to be the constant $C$ from part (1) of Lemma \ref{lem:hts}, we see that we may choose points $Q_i$ that are not preperiodic for $M_S$ as claimed.  
%\end{proof}  
We next note that many sets of rational maps on $V=\mathbb{P}^1$ generate semigroups with the cancellation property above. To make this precise, recall that $w\in\mathbb{P}^1(\overline{\mathbb{Q}})$ is called a \emph{critical value} of $\phi\in\overline{\mathbb{Q}}(x)$ if $\phi^{-1}(w)$ contains fewer than $\deg(\phi)$ elements. Likewise, we call a critical value $w$ of $\phi$ \emph{simple} if $\phi^{-1}(w)$ contains exactly $\deg(\phi)-1$ points. In particular, we have the corresponding notions for sets:   
\begin{defin} Let $S=\{\phi_1,\dots,\phi_r\}$ be a set of rational maps on $\mathbb{P}^1$ and let $\mathcal{C}_{\phi_i}$ denote the set of critical values of $\phi_i$. Then $S$ is called \emph{critically separate} if $\mathcal{C}_{\phi_i}\cap\mathcal{C}_{\phi_j}=\varnothing$ for all $i\neq j$. Moreover, $S$ is called \emph{critically simple} if every critical value of every $\phi\in S$ is simple.  
\label{def:crit}
\end{defin}  
In particular, critically separate and critically simple sets of maps have the cancellation property; see also Proposition 4.1 and Lemma 4.8 in \cite{Wade:MathZ}.          
\begin{prop}\label{thm:fstops} Let $S=\{\phi_1,\dots,\phi_r\}$ be a critically separate and critically simple set of rational maps on $\mathbb{P}^1$ all of degree at least four. The  following statements hold: \vspace{.1cm} 
\begin{enumerate} 
\item[\textup{(1)}] $M_S$ is a free semigroup.\vspace{.2cm}     
\item[\textup{(2)}] For all non-preperiodic $P$ there is a constant $t_{S,P}$ such that
\[\#\{f\in M_S\,:\, f(P)=Q\}\leq t_{S,P}\]
holds for all $Q\in\Orb_S(P)$. \vspace{.2cm} 
\item[\textup{(3)}] The semigroup $M_S$ has the cancellation property.    
\end{enumerate}  
\label{prop:can}
\end{prop}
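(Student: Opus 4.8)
The plan is to reduce all three statements to a single geometric fact and then feed it into Faltings' theorem. For $1\le i,j\le r$ let $Z_{ij}\subseteq \mathbb{P}^1\times\mathbb{P}^1$ denote the fibre-product curve $\{(x,y):\phi_i(x)=\phi_j(y)\}$, and when $i=j$ let $Z_{ii}^{\circ}$ be the closure of $Z_{ii}\setminus\Delta$, where $\Delta$ is the diagonal. The key claim — which is essentially \cite[Proposition 4.1 and Lemma 4.8]{Wade:MathZ} — is that for $i\ne j$ the curve $Z_{ij}$ is geometrically irreducible of genus $(d_i-1)(d_j-1)$, and $Z_{ii}^{\circ}$ is geometrically irreducible of genus $(d_i-1)(d_i-3)+1$; in particular all of these curves have genus $\ge 2$ once the degrees are at least four. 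Irreducibility is a monodromy computation: critical simplicity forces the monodromy group of each $\phi_i$ to be the full symmetric group $S_{d_i}$ (a transitive subgroup of $S_{d_i}$ generated by transpositions must be all of $S_{d_i}$), critical separateness makes the branch loci of $\phi_i$ and $\phi_j$ disjoint, so the associated Galois covers are linearly disjoint with group $S_{d_i}\times S_{d_j}$, which acts transitively on the $d_id_j$ ordered pairs; for $i=j$ one uses the $2$-transitivity of $S_{d_i}$ to see that $Z_{ii}$ has exactly the two components $\Delta$ and $Z_{ii}^{\circ}$. The genus is then a Riemann--Hurwitz count for the projection to one $\mathbb{P}^1$-factor, where critical simplicity pins the ramification down to simple branch points lying over the (unramified, by separateness) preimages of the critical values. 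This is the step I expect to be the real work; everything else is formal. Note that the hypothesis $\deg\phi_i\ge 4$ is genuinely needed here, since $Z_{ii}^{\circ}$ would have genus $1$ when $d_i=3$.

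Granting this, statement (1) is immediate, since a geometrically irreducible curve of genus $\ge 2$ admits no non-constant morphism from $\mathbb{P}^1$. Suppose $\phi_{i_1}\circ\cdots\circ\phi_{i_n}=\phi_{j_1}\circ\cdots\circ\phi_{j_m}$ as rational maps. If $i_1\ne j_1$, the map $x\mapsto(\phi_{i_2}\circ\cdots\circ\phi_{i_n}(x),\,\phi_{j_2}\circ\cdots\circ\phi_{j_m}(x))$ (with the convention that an empty composition is the identity) is a non-constant morphism $\mathbb{P}^1\to Z_{i_1 j_1}$, a contradiction. Hence $i_1=j_1$, and the same morphism now maps $\mathbb{P}^1$ into $Z_{i_1 i_1}=\Delta\cup Z_{i_1 i_1}^{\circ}$; it cannot land in $Z_{i_1 i_1}^{\circ}$, so it lands in $\Delta$, i.e. $\phi_{i_2}\circ\cdots\circ\phi_{i_n}=\phi_{j_2}\circ\cdots\circ\phi_{j_m}$. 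Iterating, and comparing $\mathcal{L}$-degrees to rule out $n\ne m$, shows $M_S$ is free.

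For statement (3), fix a number field $K$ over which $P$ and every $\phi_i$ is defined, so that $\Orb_S(P)\subseteq\mathbb{P}^1(K)$. By Faltings' theorem, each of the finitely many curves $Z_{ij}$ $(i\ne j)$ and $Z_{ii}^{\circ}$ has only finitely many $K$-rational points; let $B=B(P)$ be the maximum of the constant $C$ from Lemma \ref{lem:hts} and the finitely many coordinate heights of all these rational points. Suppose now $f(Q_1)=g(Q_2)$ with $Q_1,Q_2\in\Orb_S(P)$ and $h(Q_1),h(Q_2)>B$. Write $f=\phi_i\circ f_0$ and $g=\phi_j\circ g_0$ by peeling off the leftmost map (taking $f_0$ or $g_0$ to be the identity if $f$ or $g$ has length one), and set $R_1=f_0(Q_1)$, $R_2=g_0(Q_2)$. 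By Lemma \ref{lem:hts}(2) heights do not decrease under $M_S$ above the threshold $C\le B$, so $h(R_1),h(R_2)>B$, while $\phi_i(R_1)=\phi_j(R_2)$ gives $(R_1,R_2)\in Z_{ij}(K)$. If $i\ne j$, this contradicts the choice of $B$; so $i=j$. If moreover $R_1\ne R_2$, then $(R_1,R_2)\in Z_{ii}^{\circ}(K)$, again contradicting the choice of $B$. Therefore $i=j$ and $R_1=R_2$, that is, $f=\phi_i\circ f_0$, $g=\phi_i\circ g_0$ and $f_0(Q_1)=g_0(Q_2)$, which is exactly the cancellation property.

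Finally, statement (2) follows from (1) and (3) together with Lemma \ref{lem:hts}. First, if $P$ is not preperiodic then neither is $\psi(P)$ for any $\psi\in M_S$, and Lemma \ref{lem:hts} provides an $N$ with $h(\psi(P))>B$ whenever $|\psi|\ge N$. For any non-preperiodic $R$ with $h(R)>B$, the evaluation map $f\mapsto f(R)$ on $M_S$ is injective: if $f(R)=g(R)$ with $|f|=|g|$, repeated use of the cancellation property forces $f=g$, while if $|f|>|g|$ it forces $f_0(R)=R$ for a non-identity $f_0\in M_S$, contradicting that $R$ is a non-preperiodic point of height $>B$ (this is the argument used in the proof of Proposition \ref{prop:sum}). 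Now fix $Q\in\Orb_S(P)$. The maps $f$ with $f(P)=Q$ and $|f|<N$ number at most $\#S_{\le N-1}$; and any $f$ with $|f|\ge N$ factors uniquely as $f=f'\circ\psi$ with $\psi\in S_N$ and $f'\in M_S$ (or the identity), and for each of the $r^N$ choices of $\psi$ the injectivity of $f'\mapsto f'(\psi(P))$ allows at most one admissible $f'$. Hence $\#\{f\in M_S:f(P)=Q\}\le \#S_{\le N-1}+r^N=:t_{S,P}$, uniformly in $Q\in\Orb_S(P)$.
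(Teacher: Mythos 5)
Your argument is correct, and for part (3) it is essentially the paper's own proof: both define $B$ as the maximum of the Lemma~\ref{lem:hts} threshold and the heights of the finitely many $K$-points (via Faltings) on the non-diagonal components of the curves $\{\phi_i(x)=\phi_j(y)\}$, then peel off the leftmost letters and use that heights cannot drop below $B$ under $M_S$. The difference is in parts (1) and (2), which the paper simply cites from \cite[\S4]{Wade:MathZ}: you instead derive (1) directly from the geometric input (no non-constant map $\mathbb{P}^1\to Z_{i_1j_1}$ or $\mathbb{P}^1\to Z_{ii}^{\circ}$ when the genus is $\ge 2$, forcing successive letters to agree), and you obtain (2) as a formal consequence of (1), (3) and Lemma~\ref{lem:hts} via the injectivity of $f\mapsto f(R)$ at non-preperiodic points of height $>B$ --- the same injectivity argument the paper only deploys later, inside the proof of Proposition~\ref{prop:sum}. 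This buys a self-contained proposition at the cost of re-proving the monodromy/Riemann--Hurwitz facts; your sketch of those (full symmetric monodromy from simple critical values, linear disjointness from disjoint branch loci, and the genus counts $(d_i-1)(d_j-1)$ and $(d_i-1)(d_i-3)+1$) checks out and correctly isolates why degree $\ge 4$ is needed. One shared cosmetic point: when $|f|=1$ you write $f=\phi_i\circ f_0$ with $f_0$ the identity, which is not literally an element of the semigroup $M_S$ as defined; the paper's own statement of Definition~\ref{def:cancel} has the same imprecision, so this is a convention to fix globally rather than a gap in your argument.
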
 
\begin{proof} The first two statements are proved in \cite[\S4]{Wade:MathZ}. Therefore, it only remains to prove (3). 
By the main results in \cite{Pakovich} (see also \cite[Proposition 4.6]{Wade:MathZ}), we see that an irreducible component of a curve of the form
$(\phi_i,\phi_j)^{-1}(\Delta)\subseteq\mathbb{P}^1\times\mathbb{P}^1$ is either $\Delta$ or an irreducible curve of geometric genus $\ge 2$, where $\Delta$ is the diagonal subvariety.  Thus by Faltings' theorem the set $\Sigma$ of $K$-points lying on some component other than $\Delta$ is finite. Let 
\[B_0 = \max_{(P_1,P_2) \in \Sigma}\big\{\max\{h_{\mathcal{L}}(P_1),h_{\mathcal{L}}(P_2)\}\big\},\] 
let $B_1$ be the constant from Lemma \ref{lem:hts}, and let $B = \max\{B_0, B_1\}$. Now suppose that $f,g\in M_S$ and $Q_1, Q_2 \in \P^1 $ satisfy 
\[\min(h_{\mathcal{L}}(Q_1), h_{\mathcal{L}}(Q_2)) > B\] 
and $f(Q_1)=g(Q_2)$. We will show that $B$ satisfies the conditions needed in Definition \ref{def:cancel}. To see this, write $f=\phi_i\circ f_0$ and $g=\phi_j\circ g_0$ for  $f_0,g_0\in M_S$ and $1\leq i,j\leq r$. Note that if either $i\neq j$ or $i=j$ and $f_0(Q_1)\neq g_0(Q_2)$, then 
$(f_0(Q_1),g_0(Q_2))$ lies on an irreducible component of $(\phi_i,\phi_j)^{-1}(\Delta)$ that is not equal to $\Delta$. Hence, $(f_0(Q_1),g_0(Q_2))$ is a point on $\Sigma$, and so \[\max(h_{\mathcal{L}}(f_0(Q_1)),h_{\mathcal{L}}(g_0(Q_2)) )\leq B_0\leq B.\] 
On the other hand, since  $\min(h_{\mathcal{L}}(Q_1), h_{\mathcal{L}}(Q_2)) > B \geq B_1$, it follows from Lemma \ref{lem:hts} that 
\[\min(h_{\mathcal{L}}(f_0(Q_1)),h_{\mathcal{L}}(g_0(Q_2)) ) > B,\] 
a contradiction. Therefore, $i=j$ and $f_0(Q_1)\neq g_0(Q_2)$ as desired.  
\end{proof}

We can now improve the main result in \cite{Wade:MathZ}. 
\begin{thm}\label{thm:rational}  Let $S=\{\phi_1,\dots,\phi_r\}$ be a critically separate and critically simple set of rational maps on $\mathbb{P}^1$ all of degree at least four. Moreover, assume that $r\geq2$ and let $\rho$ be the positive real number satisfying $\sum \deg(\phi_i)^{-\rho}=1$. Then 
\[\#\{Q\in\Orb_S(P)\,:\, h(Q)\leq X\}\asymp X^{\rho}\]
for all $P\in\mathbb{P}^1(\overline{\mathbb{Q}})$ such that $\Orb_S(P)$ is infinite. Moreover, if the degrees of the maps in $S$ are acyclic, then
\begin{equation*}
\#\{Q\in\Orb_S(P)\,:\, h(Q)\leq X\}\sim cX^{\rho}
\end{equation*} 
for some positive constant $c$, depending on $P$ and $S$. \end{thm}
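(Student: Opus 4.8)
The plan is to reduce Theorem~\ref{thm:rational} to the function-counting results already established, using the structural results on critically separate and critically simple sets. First I would invoke Proposition~\ref{prop:can}: such a set $S$ generates a \emph{free} semigroup $M_S$ and has the cancellation property. Since $\Orb_S(P)$ is assumed infinite, Proposition~\ref{prop:sum} then applies and produces a $B>0$, an integer $t\geq 1$, and non-preperiodic points $Q_1,\dots,Q_t\in\Orb_S(P)$ with $h(Q_i)>B$ such that
\[
\#\{Q\in\Orb_S(P)\,:\, h(Q)\leq X\}\;\sim\;\sum_{i=1}^{t}\#\{f\in M_S\,:\, h(f(Q_i))\leq X\}.
\]
This is the key transfer step: it converts counting \emph{points} in the orbit into counting \emph{functions} yielding a bounded-height relation, which is exactly what Theorem~\ref{thm:functioncount} controls.

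Next I would feed each term on the right-hand side into Theorem~\ref{thm:functioncount}. Since $M_S$ is free and each $Q_i$ is non-preperiodic, part~(3) gives $\#\{f\in M_S\,:\,h(f(Q_i))\leq X\}\asymp X^{\rho}$ for every $i$, where $\rho$ is determined by $\sum\deg(\phi_i)^{-\rho}=1$ (note this $\rho$ depends only on $S$, not on the $Q_i$). Summing the finitely many terms preserves the $\asymp X^{\rho}$ bound, and combining with the asymptotic equivalence from Proposition~\ref{prop:sum} yields $\#\{Q\in\Orb_S(P)\,:\,h(Q)\leq X\}\asymp X^{\rho}$, proving the first assertion. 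For the refined statement, if in addition the degree vector $\pmb{d}=(\deg\phi_1,\dots,\deg\phi_r)$ is acyclic, then part~(4) of Theorem~\ref{thm:functioncount} gives $\#\{f\in M_S\,:\,h(f(Q_i))\leq X\}\sim c_i X^{\rho}$ for each $i$, with $c_i>0$ depending on $Q_i$ and $S$. Setting $c:=c_1+\dots+c_t>0$ and using Proposition~\ref{prop:sum} once more gives $\#\{Q\in\Orb_S(P)\,:\,h(Q)\leq X\}\sim c X^{\rho}$; the constant $c$ depends on $P$ (through the choice of the $Q_i$, which are themselves functions of $P$, $S$, and $B$) and on $S$, as claimed.

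The main obstacle — and the reason the hypotheses ``degree at least four'', ``critically separate'', and ``critically simple'' are all needed — lies not in this final assembly but in Proposition~\ref{prop:can}(3), the cancellation property, which rests on the genus bounds for the fiber products $(\phi_i,\phi_j)^{-1}(\Delta)$ (via Pakovich's classification) together with Faltings' theorem to make the exceptional set $\Sigma$ finite. Granting that input, the present argument is essentially a bookkeeping exercise: one must only be careful that (i) the $Q_i$ are genuinely non-preperiodic so that parts~(3) and~(4) of Theorem~\ref{thm:functioncount} apply — this is guaranteed by $h(Q_i)>B\geq B_1$ and Lemma~\ref{lem:hts} — and (ii) a finite sum of functions each asymptotic to $c_i X^{\rho}$ is asymptotic to $(\sum c_i)X^{\rho}$, which is immediate since $t$ is a fixed constant. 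I would also remark, as the paper does, that the implied constants (and $c$) are ineffective because $B_0$, and hence the points $Q_i$, incorporate height bounds for rational points on curves of genus $\geq 2$ coming from Faltings' theorem.
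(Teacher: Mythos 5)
Your proposal is correct and follows essentially the same route as the paper's own proof: Proposition~\ref{prop:can} to get freeness and the cancellation property, Proposition~\ref{prop:sum} to reduce counting orbit points to counting functions at finitely many non-preperiodic basepoints $Q_i$, and then parts (3) and (4) of Theorem~\ref{thm:functioncount} to conclude. The added remarks on why the $Q_i$ are non-preperiodic and on the ineffectivity of the constants match the paper's discussion.
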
 
\begin{proof} Assume that $P$ is a point in $\P^1$ satisfies that $\Orb_S(P)$ is infinite. Then Proposition \ref{thm:fstops} implies that $M_S$ is free and has the cancellation property. Hence, Proposition \ref{prop:sum} implies that there exists non-preperiodic points $Q_1,\ldots ,Q_t\in\Orb_S(P)$ such that 
\begin{equation}\label{step1}
\scalemath{0.95}{
\#\{Q\in {\rm Orb}_S(P) \colon h(Q) \le X\} \sim \
\sum_{i=1}^t \#\{f\in M_S\,:\, h(f(Q_i)) \le X\}.}
\end{equation} 
From here, the result now follows directly from Theorem \ref{thm:functioncount}. For instance, if the the degrees of the maps in $S$ are acyclic, then statement (4) of Theorem \ref{thm:functioncount} implies that for each $Q_i$ there are positive constants $c_{S,Q_i}$ such that 
\begin{equation}\label{step2}
\#\{f\in M_S\,:\, h(f(Q_i))\leq X\}\sim c_{S,Q_i}X^{\rho}.
\end{equation} 
Here, we use that the $Q_i$ are not preperiodic. In particular, combining \eqref{step1} and \eqref{step2} yields the desired conclusion.  Likewise, the general case follows from \eqref{step1} and statement (3) of Theorem \ref{thm:functioncount}.

%$Finally, if $\Orb_S(P)$ is merely infinite, then Lemma \ref{lem:hts} implies that there exists a non-preperiodic point $P'\in\Orb_S(P)$. In particular, 
%\[\scalemath{0.95}{X^{\rho}\ll\#\{Q\in\Orb_S(P')\,:\, h(Q)\leq X\}\leq\#\{Q\in\Orb_S(P)\,:\, h(Q)\leq X\}\ll X^\rho}\]
%by Theorem \ref{thm:universalupperbd} and the case of non-preperiodic points above.      
\end{proof}
In particular, if we fix integers $d_1,\dots d_r$ all at least four, then the bound for orbits in Theorem \ref{thm:rational} holds on a Zariski open subset of $\Rat_{d_1}\times\dots\times\Rat_{d_r}$.    
\begin{proof}[(Proof of Theorem \ref{thm:generic})] For each $d_i$, the proof of \cite[Theorem 1.4]{Pakovich} implies that there is a proper open subset $U_i\subset\Rat_{d_i}$ such that the curve
\begin{equation}\label{curvei}
C_\phi:=\Big\{(x,y)\,:\, \frac{\phi(x)-\phi(y)}{x-y}=0\Big\} 
\end{equation} 
is irreducible and of genus at least $2$ for all $\phi\in U_\phi$. Likewise, the proof of \cite[Theorem 1.2]{Pakovich} implies that for each pair $(d_i, d_j)$ there are proper open subsets $V_{ij}\subset \Rat_{d_i}$ and $W_{ij}\subset\Rat_{d_j}$ such that the curve
\begin{equation}\label{curveij}
C_{\phi,\psi}:=\Big\{(x,y)\,:\, \phi(x)=\phi(y)\Big\}
\end{equation} 
is irreducible and of genus at least $2$ for all $\phi\in V_{ij}$ and all $\psi\in W_{ij}$. Now define the open set 
\[
\scalemath{.95}{
U:=O_1\times\dots\times O_r\subset\Rat_{d_1}\times\dots\times\Rat_{d_r},\,\;\;\text{where}\;\;\, O_i=\bigcap_{j=1}^r(U_i\cap V_{ij}\cap W_{ji}).}
\] 
In particular, if $(\phi_1,\dots,\phi_r)\in U$, then $C_{\phi_i}$ and $C_{\phi_i,\phi_j}$ are irreducible and genus at least $2$ for all $i$ and $j$. Now let $S=\{\phi_1,\dots, \phi_r\}$. Then the conclusions of Proposition \ref{thm:fstops} hold for $S$ - the proof of that result uses only that $C_{\phi_i}$ and $C_{\phi_i,\phi_j}$ have finitely many rational points, which follows from Faltings theorem. In particular, as in the proof of Theorem \ref{thm:rational} above, the desired asymptotics now follow directly from combining statements (3) or (4) of Theorem \ref{thm:functioncount} with Proposition \ref{thm:fstops}.          
\end{proof} 
Although Theorems \ref{thm:rational} and \ref{thm:generic} do not apply directly to polynomials, similar statements can be made in this case assuming the disjointness and simplicity of \emph{affine} critical values and that the degrees of the polynomials in $S$ are at least $5$; see, for instance, \cite[\S3]{poly:functional}. However, such conditions are stronger than those in the following result; compare to \cite[Theorem 1.6]{Wade:MathZ}. 
\begin{thm}\label{thm:poly}
    Let $S=\{\phi_1,\dots,\phi_r\}$ be a set of polynomials defined over a number field $K$ with distinct and acyclic degrees all at least $2$. Moreover, let $\rho$ be the real number satisfying $\sum_{\phi \in S} \deg(\phi)^{-\rho}=1$ and assume that the following conditions hold: \vspace{.15cm} 
    \begin{enumerate}
        \item Each $\phi_i \in S$ is not of the form $R \circ E \circ L$ for some polynomial $R \in \overline{\Q}[x]$, some cyclic or Chebyshev polynomial $E$ with $\deg(E)\geq2$, and some linear polynomial $L \in \overline{\Q}[x]$; \vspace{.15cm} 
        \item For all $i \neq j$, we have that $\phi_j \neq \phi_i \circ F$ for any $F \in \overline{\Q}[x]$. \vspace{.15cm} 
    \end{enumerate}
    Then $M_S$ is a free semigroup and for all points $P\in \P^1(K)$ such that $\Orb_S(P)$ is infinite we have that 
    $$
    \# \{Q \in \Orb_S(P) : h(P) \leq X\} \sim cX^\rho \vspace{.1cm} 
    $$
    for some positive constant $c$ depending on $S$ and $P$. 
\end{thm}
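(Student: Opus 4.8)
The plan is to follow the same pipeline used to prove Theorem \ref{thm:rational}, but replacing the critical-value hypotheses with the polynomial decomposition hypotheses (1) and (2). Concretely, the only thing that needs to be re-established in the polynomial setting is that $M_S$ is free and has the \emph{cancellation property} of Definition \ref{def:cancel}; once that is known, Proposition \ref{prop:sum} reduces the orbit count to a finite sum of function counts $\#\{f \in M_S : h(f(Q_i)) \le X\}$ over non-preperiodic points $Q_i$, and statement (4) of Theorem \ref{thm:functioncount} (applicable since the degrees are distinct, hence in particular not all pairwise multiplicatively dependent, i.e. acyclic) finishes the job, giving $cX^\rho$ as in the proof of Theorem \ref{thm:rational}.

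So the heart of the matter is a polynomial analogue of Proposition \ref{thm:fstops}. First I would address freeness: if $M_S$ were not free there would be a nontrivial relation $\phi_{i_1}\circ\cdots\circ\phi_{i_m} = \phi_{j_1}\circ\cdots\circ\phi_{j_n}$, and comparing degrees together with hypothesis (2) (which forbids $\phi_j = \phi_i \circ F$ for $i \ne j$) and standard uniqueness properties of polynomial decomposition should force the two words to be identical; hypothesis (1) rules out the degenerate decomposition behavior of cyclic/Chebyshev maps that would otherwise obstruct this kind of cancellation argument. Next, for the cancellation property, I would examine the curve $C_{\phi_i,\phi_j} = \{(x,y) : \phi_i(x) = \phi_j(y)\} \subseteq \mathbb{A}^1 \times \mathbb{A}^1$: the key input is a theorem of Bilu--Tichy / Avanzi--Zannier type classifying when such a curve has an irreducible component of genus $0$ or $1$ (equivalently, infinitely many $K$-rational, or $S$-integral, points), showing that under hypotheses (1) and (2) the only ``large'' component is the one forced by a common left factor $\phi_i = \psi\circ u$, $\phi_j = \psi\circ v$. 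Then exactly as in the proof of Proposition \ref{thm:fstops}, Faltings' theorem (or the relevant finiteness for integral points) makes the exceptional locus $\Sigma$ a set of bounded height, one sets $B = \max(B_0,B_1)$ with $B_1$ from Lemma \ref{lem:hts}, and the height-growth dichotomy of Lemma \ref{lem:hts} forces the common-left-factor alternative whenever both base points have height exceeding $B$.

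The main obstacle I anticipate is isolating exactly the right decomposition-theoretic statement: one needs that $\phi_i(x) = \phi_j(y)$ having a genus $\le 1$ component \emph{other} than one coming from a common left composition factor implies that some $\phi_k$ is (up to pre- and post-composition with linears) a cyclic or Chebyshev polynomial — this is essentially where hypothesis (1) is used, and it requires care in the polynomial case because polynomials can be conjugated by affine maps and because small-degree or characteristic-related exceptions can creep in (hence the degrees $\ge 2$ and working over a number field). I would quote the classification of polynomial pairs with a reducible/low-genus ``separated variables'' curve — e.g. from Bilu--Tichy or from \cite[\S3]{poly:functional} — rather than reprove it, and then the ``repeated application of cancellation'' bookkeeping in Proposition \ref{prop:sum} carries over verbatim, since it only used formal properties of the cancellation property and freeness. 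A secondary technical point is making sure the finitely many $Q_i$ produced by Lemma \ref{lem:general} genuinely lie in $\P^1(K)$ and are non-preperiodic, but this is handled exactly as in Proposition \ref{prop:sum} using Northcott and Lemma \ref{lem:hts}.
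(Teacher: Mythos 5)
Your proposal follows essentially the same route as the paper: the paper cites \cite[Theorem 1.6]{Wade:MathZ} for freeness and \cite[Proposition 4.5]{Wade:MathZ} (which rests on exactly the Bilu--Tichy/Avanzi--Zannier--type classification you invoke) for the finiteness of $\mathcal{O}$-integral points on the curves $C_{\phi}$ and $C_{\phi,\psi}$, then runs the identical cancellation-property argument from Proposition \ref{prop:can}(3) and concludes via Proposition \ref{prop:sum} and Theorem \ref{thm:functioncount}(4). One small correction: distinctness of the degrees does \emph{not} imply acyclicity (e.g.\ $2$ and $4$), but this is harmless since acyclicity is a standing hypothesis of the theorem.
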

\begin{proof}
     The semigroup $M_S$ is free by \cite[Theorem 1.6]{Wade:MathZ}. For the rest, take $\mathcal{O} \subset K$ to be a ring of $\mathcal{S}$-integers in $K$ containing $P$ and all the coefficients of polynomials in $S$. Then \cite[Proposition 4.5]{Wade:MathZ} implies that the curves $C_{\phi}$ and $C_{\phi,\psi}$ for all $\phi,\psi\in S$ defined in \eqref{curvei} and \eqref{curveij} have finitely many $\mathcal{O}$-points; that is, the set  
     \[\Sigma:=\{(x,y)\in \mathcal{O}\times\mathcal{O}\,: (x,y)\in C_{\phi}\;\text{or}\; (x,y)\in C_{\phi,\psi}\;\text{for some $\phi,\psi\in S$}\} \] 
     is finite. From here, an identical proof to that of statement (3) of Proposition \ref{prop:can} implies that $M_S$ has the cancellation property; specifically, if 
\[B_0 = \max_{(x,y) \in \Sigma}\big\{\max\{h(x),h(y)\}\big\},\] 
if $B_1$ is the constant from Lemma \ref{lem:hts}, and if $B = \max\{B_0, B_1\}$, then $B=B(P)$ satisfies the conditions of Definition \ref{def:cancel}. In particular, the desired asymptotics now follow directly from Proposition \ref{prop:sum} and Theorem \ref{thm:functioncount}. 
     \end{proof}

%For general rational functions, we have a weaker result
%\begin{thm}  Let $\phi_1,\ldots, \phi_s\in\overline{\mathbb{Q}}(x)$ be rational functions of degree $2$ and suppose there exist $i,j\in \{1,\ldots ,s\}$ such that $\phi_i$ has a preperiodic point that is not a preperiodic point of $\phi_j$.
%Then there is a positive constant $B$ and positive real numbers $\rho_1,\rho_2$ such that if $P\in \mathbb{P}^1(\overline{\mathbb{Q}})$ satisfies $H(P)>B$ then 
 %\[(\log X)^{\rho_1}\ll \#\{Q\in \Orb_S(P)\,:\, H(Q)\leq X\}\ll (\log X)^{\rho_2}.\] 
%\end{thm}
%\begin{proof}
%Let $d$ and $e$ denote the degrees of $f$ and $g$ respectively.  By \cite{Tits}, there is some $n$ such that $\langle f^n, g^n\rangle$ is a free subsemigroup of $\langle f,g\rangle$.  It follows from ??? and ??? that
 %\[(\log X)^{\rho_1}\ll \#\{Q\in \Orb_S(P)\,:\, H(Q)\leq X\}\ll (\log X)^{\rho_2},\] 
% where $\rho_2$ is the unique solution in $[1,\infty)$ to the equation
% $1/d^t+ 1/e^t=1$ and $\rho_1=\rho_2/n$.  
%\end{proof}

\bibliographystyle{plain}
\bibliography{HieghtCounting.bib}

\begin{thebibliography}{10}

\bibitem{Tits}
Jason~P. Bell, Keping Huang, Wayne Peng, and Thomas~J. Tucker.
\newblock A tits alternative for rational functions.
\newblock \emph{J. Eur. Math. Soc.}, to appear. Preprint at arXiv:2103.09994.

\bibitem{MR4007163}
Robert Benedetto, Patrick Ingram, Rafe Jones, Michelle Manes, Joseph~H.
  Silverman, and Thomas~J. Tucker.
\newblock Current trends and open problems in arithmetic dynamics.
\newblock {\em Bull. Amer. Math. Soc. (N.S.)}, 56(4):611--685, 2019.

\bibitem{Erdos}
P.~Erd\"{o}s.
\newblock On some asymptotic formulas in the theory of the ``factorisation
  numerorum.''.
\newblock {\em Ann. of Math. (2)}, 42:989--993, 1941.

\bibitem{poly:functional}
Huy~Khoai Ha and C.~C. Yang.
\newblock On the functional equation {$P(f)=Q(g)$}.
\newblock In {\em Value distribution theory and related topics}, volume~3 of
  {\em Adv. Complex Anal. Appl.}, pages 201--207. Kluwer Acad. Publ., Boston,
  MA, 2004.

\bibitem{Wade+Viv}
Vivian~Olsiewski Healey and Wade Hindes.
\newblock Stochastic canonical heights.
\newblock {\em J. Number Theory}, 201:228--256, 2019.

\bibitem{Wade:MathZ}
Wade Hindes.
\newblock Counting points of bounded height in monoid orbits.
\newblock {\em Math. Z.}, 301(4):3395--3416, 2022.

\bibitem{Joe+Wade}
Wade Hindes and Joseph~H Silverman.
\newblock The size of semigroup orbits modulo primes, 2023.
\newblock preprint at arXiv:2303.14819.

\bibitem{Kawaguchi}
Shu Kawaguchi.
\newblock Canonical heights, invariant currents, and dynamical eigensystems of
  morphisms for line bundles.
\newblock {\em J. Reine Angew. Math.}, 597:135--173, 2006.

\bibitem{KS}
Shu Kawaguchi and Joseph~H. Silverman.
\newblock On the dynamical and arithmetic degrees of rational self-maps of
  algebraic varieties.
\newblock {\em J. Reine Angew. Math.}, 713:21--48, 2016.

\bibitem{Survey}
A.~Knopfmacher and M.~E. Mays.
\newblock A survey of factorization counting functions.
\newblock {\em Int. J. Number Theory}, 1(4):563--581, 2005.

\bibitem{KSConj1}
John Lesieutre and Matthew Satriano.
\newblock Canonical heights on hyper-{K}\"{a}hler varieties and the
  {K}awaguchi-{S}ilverman conjecture.
\newblock {\em Int. Math. Res. Not. IMRN}, (10):7677--7714, 2021.

\bibitem{KSConj2}
Jan-Li Lin.
\newblock On the arithmetic dynamics of monomial maps.
\newblock {\em Ergodic Theory Dynam. Systems}, 39(12):3388--3406, 2019.

\bibitem{KSConj3}
Yohsuke Matsuzawa.
\newblock Kawaguchi-{S}ilverman conjecture for endomorphisms on several classes
  of varieties.
\newblock {\em Adv. Math.}, 366:107086, 26, 2020.

\bibitem{KSConj4}
Yohsuke Matsuzawa and Kaoru Sano.
\newblock Arithmetic and dynamical degrees of self-morphisms of semi-abelian
  varieties.
\newblock {\em Ergodic Theory Dynam. Systems}, 40(6):1655--1672, 2020.

\bibitem{KSConj5}
Yohsuke Matsuzawa and Shou Yoshikawa.
\newblock Kawaguchi-{S}ilverman conjecture for endomorphisms on rationally
  connected varieties admitting an int-amplified endomorphism.
\newblock {\em Math. Ann.}, 382(3-4):1681--1704, 2022.

\bibitem{Montgomery}
Hugh~L. Montgomery and Robert~C. Vaughan.
\newblock {\em Multiplicative number theory. {I}. {C}lassical theory},
  volume~97 of {\em Cambridge Studies in Advanced Mathematics}.
\newblock Cambridge University Press, Cambridge, 2007.

\bibitem{Murty}
M.~Ram Murty, Michael Rosen, and Joseph~H. Silverman.
\newblock Variations on a theme of {R}omanoff.
\newblock {\em Internat. J. Math.}, 7(3):373--391, 1996.

\bibitem{Pakovich}
F.~Pakovich.
\newblock Algebraic curves {$P(x)-Q(y)=0$} and functional equations.
\newblock {\em Complex Var. Elliptic Equ.}, 56(1-4):199--213, 2011.

\bibitem{Serre}
Jean-Pierre Serre, Martin Brown, and Michel Waldschmidt.
\newblock {\em Lectures on the Mordell-Weil theorem}.
\newblock Springer, 1989.

\bibitem{SilvK3}
Joseph~H. Silverman.
\newblock Rational points on {$K3$} surfaces: a new canonical height.
\newblock {\em Invent. Math.}, 105(2):347--373, 1991.

\bibitem{SilvDyn}
Joseph~H. Silverman.
\newblock {\em The arithmetic of dynamical systems}, volume 241 of {\em
  Graduate Texts in Mathematics}.
\newblock Springer, New York, 2007.

\bibitem{MR2884382}
Joseph~H. Silverman.
\newblock {\em Moduli spaces and arithmetic dynamics}, volume~30 of {\em CRM
  Monograph Series}.
\newblock American Mathematical Society, Providence, RI, 2012.

\end{thebibliography}

\end{document}